\documentclass[a4paper]{article}

\usepackage{amsfonts}
\usepackage{graphicx,epsfig,subfigure}
\usepackage{amsmath,amsthm}
\usepackage{subfigure}
\usepackage{a4wide}
\usepackage{color}

 \def\2{C^{1,2}(\R\times\R^N)}

\def\e{\epsilon}
\def\eps{\epsilon}

\def\R{\mathbb{R}}

\def\epsilon{\varepsilon}

\def\ind{\mathbf{1}}

\newtheorem{thm}{\bf Theorem}
\newtheorem{lem}[thm]{\bf Lemma}
\newtheorem{prop}[thm]{\bf Proposition}

\newtheorem{defi}[thm]{\bf Definition}
\newtheorem{rmq}[thm]{\bf Remark}

\numberwithin{equation}{section}

\newcommand{\U}{\nu}
\newcommand{\w}{u}
\newcommand{\uu}{w}

\newcommand{\supp}{\mathrm{Supp}\,}
\newcommand{\minmod}{\mathrm{minmod}\,}
\newcommand{\sign}{\mathrm{sign}\,}

\begin{document}
\title{Hyperbolic traveling waves driven by growth}

\author{Emeric Bouin\footnote{Ecole Normale Sup\'erieure de Lyon, UMR CNRS 5669 'UMPA', 46 all\'ee d'Italie, F-69364~Lyon~cedex~07, France. E-mail: \texttt{emeric.bouin@ens-lyon.fr}}, Vincent Calvez\footnote{Corresponding author}\;\footnote{Ecole Normale Sup\'erieure de Lyon, UMR CNRS 5669 'UMPA', and INRIA project NUMED, 46 all\'ee d'Italie, F-69364~Lyon~cedex~07, France. E-mail: \texttt{vincent.calvez@ens-lyon.fr}}, Gr\'egoire Nadin\footnote{Universit\'e Pierre et Marie Curie-Paris 6, UMR CNRS 7598 'LJLL', BC187, 4 place de Jussieu, F-75252~Paris~cedex~05, France. E-mail: \texttt{nadin@ann.jussieu.fr}}}
\date{\today}
\maketitle

\begin{abstract}
We perform the analysis of a hyperbolic model which is the analog of the Fisher-KPP equation. This model accounts for particles that move at maximal speed $\eps^{-1}$ ($\eps>0$), and proliferate according to a reaction term of monostable type. We study the existence and stability of traveling fronts. We exhibit a transition depending on the parameter $\epsilon$: for small $\epsilon$ the behaviour is essentially the same as for the diffusive Fisher-KPP equation. However, for large $\epsilon$ the traveling front with minimal speed is discontinuous and travels at the maximal speed $\eps^{-1}$. The traveling fronts with minimal speed are linearly stable in weighted $L^2$ spaces. We also prove local nonlinear stability of the traveling front with minimal speed when $\eps$ is smaller than the transition parameter.
\end{abstract}

\noindent {\bf Key-words:} traveling waves; Fisher-KPP equation; telegraph equation; nonlinear stability.

\noindent {\bf AMS classification:} 35B35; 35B40; 35L70; 35Q92

\section{Introduction}

We consider the problem of traveling fronts driven by growth ({\em e.g.} cell division) together with cell dispersal, where the motion process is given by a hyperbolic equation. This is motivated by the occurence of traveling pulses in populations of bacteria swimming inside a narrow channel \cite{Adler66,Saragosti1}. It has been demonstrated that kinetic models are well adapted to this problem \cite{Saragosti2}. We will focus on the following model introduced by Dunbar and Othmer \cite{DO} (see also Hadeler \cite{Hadeler}) and Fedotov \cite{Fedotov98,Fedotov99,Fedotov08}
\begin{equation} \label{eqprinc}
 \e^2 \partial_{tt}\rho_\e(t,x) +\left(1-\e^2 F'(\rho_\e(t,x))\right)\partial_t \rho_\e(t,x) - \partial_{xx} \rho_\e(t,x) =  F(\rho_\e(t,x))\, , \quad t>0\, , \quad x\in \R \, .
\end{equation}
The cell density is denoted by $\rho_\e(t,x)$. The parameter $\e>0$ is a scaling factor. It accounts for the ratio between the mean free path of cells and the space scale. The growth function $F$ is subject to the following assumptions (the so-called monostable nonlinearity)
\begin{equation}\label{eq:hyp F}
\left\{\begin{array}{l} 
F \in\mathcal{C}^{3} ([0,1])\,,\quad   F \hbox{ is uniformly strictly concave}:\;  \inf_{[0,1]} (- F'') =: \alpha >0 \,, \medskip\\ 
F(0)=F(1)=0\,,\quad F(\rho)>0\; \mbox{ if }\; \rho\in (0,1)\,.
\end{array}\right.
\end{equation}
For the sake of clarity we will sometimes take as an example the logistic growth function $F(\rho) = \rho(1- \rho)$.

Equation \eqref{eqprinc} is equivalent to the hyperbolic system 
\begin{equation}
\label{eq:telegraph intro}
\left\{
\begin{array}{l}
\partial_t \rho_\e + \e^{-1}\partial_x \left( j_\e \right) = F(\rho_\e) \\
\e \partial_t j_\e + \partial_x \rho_\e = -\e^{-1} j_\e \, . 
\end{array}
\right.
\end{equation}
The expression of $j_\e$ can be computed explicitly in terms of $\rho_\e$ as follows,
\begin{equation}
j_\e(t,x) = - \frac{1}{\e}\int_{0}^{t} \partial_x \rho_{\e}(s,x) \exp{\left( \frac{ s - t }{\e^2} \right) }\, ds+j_\eps(0,x)\,,
\end{equation}
but this expression will not be directly used afterwards. We will successively use the formulation \eqref{eqprinc} or the equivalent formulation \eqref{eq:telegraph intro}.

\bigskip

Since the pioneering work by Fisher \cite{Fisher37} and Kolmogorov-Petrovskii-Piskunov \cite{KPP}, dispersion of biological species has been usually modelled by mean of reaction-diffusion equations. The main drawback of these models is that they allow infinite speed of propagation. This is clearly irrelevant for biological species. Several modifications have been proposed to circumvent this issue. It has been proposed to replace the linear diffusion by a nonlinear diffusion of porous-medium type \cite{Shigesada80,Murray03a,Painter02}. This is known to yield propagation of the support at finite speed \cite{Nagai83a,Nagai83b}. The density-dependent diffusion coefficient stems for a pressure effect among individuals which influences the speed of diffusion. Pressure is very low when the population is sparse, whereas it has a strong effect when the population is highly densified. Recently, this approach has been developped for the invasion of glioma cells in the brain \cite{Aubert09}. Alternatively, some authors have proposed to impose a limiting flux for which the nonlinearity involves the gradient of the concentration \cite{Mazon06,Soler,Mazon11}. 

The diffusion approximation is generally acceptable in ecological problems where space and time scales are large enough. However, kinetic equations have emerged recently to model self-organization in bacterial population at smaller scales \cite{Ajmb80,ODA,ErbanOthmer04,aerotaxis,PerthameBook,Saragosti1,Saragosti2}. These models are based on velocity-jump processes. It is now standard to perform a drift-diffusion limit to recover classical reaction-diffusion equations \cite{HillenOthmer,CMPS,ErbanOthmer04,Hwang}. However it is claimed in \cite{Saragosti2} that the diffusion approximation  is not suitable, and the full kinetic equation has to be handled with. 
Equation \eqref{eqprinc} can be reformulated as a kinetic equation with two velocities only $v = \pm \eps^{-1}$ (see \eqref{eq:twospeeds} below). This provides a clear biological interpretation of equation (1.1) as a simple model for bacteria colonies where bacteria reproduce themselves, and move following a run-and-tumble process. 



Hyperbolic models coupled with growth have already been studied in \cite{DO,Hadeler,GallayRaugel,Cuesta}. In \cite{Hadeler} it is required that the nonlinear function in front of the time first derivative $\partial_t \rho_\e$ is positive (namely here, $1-\e^2 F'(\rho)>0$). Indeed, this enables to perform a suitable change of variables in order to reduce to the classical Fisher-KPP problem. In our context this is equivalent to $\e^2 F'(0) <1$ since $F$ is concave. In \cite{GallayRaugel} this nonlinear contribution is replaced by 1: the authors study the following equation (damped hyperbolic Fisher-KPP equation),
\begin{equation*}
 \e^2 \partial_{tt}\rho_\e(t,x) +\partial_t \rho_\e(t,x) - \partial_{xx} \rho_\e(t,x) =  F(\rho_\e(t,x))\, .
\end{equation*}

We also refer to \cite{Cuesta} where the authors analyse a kinetic model more general than \eqref{eqprinc}. They develop a perturbative approach, close to the diffusive regime $\e\ll 1$. 

\bigskip

It is worth recalling some basic results related to reaction-diffusion equations. First, as $\e \to 0$ the density $\rho_\e$ solution to \eqref{eqprinc} formally converges to a solution of the Fisher-KPP equation \cite{Cuesta}:
\begin{equation*}
\partial_t\rho_0(t,x) - \partial_{xx} \rho_0(t,x) = F(\rho_0(t,x))\, .
\end{equation*}
The long time behaviour of such equation is well understood since the pioneering works by Kolmogorov-Petrovsky-Piskunov \cite{KPP} and Aronson-Weinberger \cite{AW}. For nonincreasing initial data with sufficient decay at infinity the solution behaves asymptotically as a traveling front moving at the speed $s = 2\sqrt{F'(0)}$. Moreover the traveling front solution with minimal speed is stable in some $L^2$ weighted space \cite{Gallay94}.

In this work we prove that analogous results hold true in the {\em parabolic regime} $\e^2 F'(0) <1$. Namely there exists a continuum of speeds $[s^*(\eps),\e^{-1})$ for which \eqref{eqprinc} admits smooth traveling fronts. The minimal speed is given by \cite{Fedotov98}
\begin{equation}\label{eq:minspeed 1}
s^*(\e)= \frac{2\sqrt{F'(0)}}{1+\e^2 F'(0)}\, , \quad \hbox{if } \e^2 F'(0) < 1\, .
\end{equation}
Obviously we have $s^*(\eps) \leq \min(2\sqrt{F'(0)},\e^{-1})$. There also exists {\em supersonic} traveling fronts, with speed $s>\eps^{-1}$. This appears surprising at first glance since the speed of propagation for the hyperbolic equation \eqref{eqprinc} is $\eps^{-1}$ (see formulation \eqref{eq:telegraph intro} and Section \ref{sec:num}). These fronts are essentially driven by growth, since they travel faster than the maximum speed of propagation. The results are summarized in the following Theorem.

\begin{thm}[Parabolic regime] \label{thm-diffusion}
Assume that $\e^2 F'(0)< 1$. The following alternatives hold:
\begin{description}
\item[(a)] There exists no smooth or weak traveling front of speed $s\in [0,s^*(\e))$.
\item[(b)] For all $s\in [s^*(\e),\e^{-1})$, there exists a smooth traveling front solution of \eqref{eqprinc} with speed $s$. 
\item[(c)] For $s = \e^{-1}$ there exists a weak traveling front.
\item[(d)] For all $s\in (\e^{-1},\infty)$ there also exists a smooth traveling front of speed $s$.
\end{description}
\end{thm}

We also obtain that the minimal speed traveling front is nonlinearly locally stable in the parabolic regime $\e^2 F'(0)< 1$ (see Section \ref{sec:NLstab}, Theorem \ref{thm:NLstab}).

There is a transition occuring when $\e^2 F'(0) = 1$. In the  {\em hyperbolic regime} $\e^2 F'(0) \geq 1$ the minimal speed speed becomes:
\begin{equation}\label{eq:minspeed 2}
s^*(\eps) = \e^{-1} \, , \quad \hbox{if } \e^2 F'(0) \geq 1\, .
\end{equation}
On the other hand, the front traveling with minimal speed $s^*(\eps)$ is discontinuous as soon as $\e^2 F'(0) > 1$. In the critical case $\e^2 F'(0) =1$ there exists a    continuous but not smooth traveling front with minimal speed $s^* = \sqrt{F'(0)}$. 

\begin{thm}[Hyperbolic regime] \label{thm-hyperbolic}
Assume that $\e^2 F'(0)\geq 1$. The following alternatives hold:
\begin{description}
\item[(a)] There exists no smooth or weak traveling front of speed $s\in [0,s^*(\e))$.
\item[(b)] There exists a weak traveling front solution of \eqref{eqprinc} with speed $s^*(\eps)=\e^{-1}$. The wave is discontinuous if $\e^2 F'(0)> 1$. 
\item[(c)] For all $s\in (\e^{-1},\infty)$ there exists a smooth traveling front of speed $s$. 
\end{description}
\end{thm}



\bigskip 

We conclude this introduction by giving the precise definition of traveling fronts (smooth and weak) that will be used throughout the paper. 


\begin{defi} \label{def-smoothtw}
We say that a function $\rho(t,x)$ is a smooth traveling front solution with speed $s$ of equation \eqref{eqprinc} if it can 
be written $\rho(t,x)=\U(x-st)$, where $\U\in\mathcal{C}^2(\R)$, $\U\geq 0$, $\U(-\infty)=1$, $\U(+\infty)=0$ and $\U$ satisfies
\begin{equation} \label{eqtw}
 (\e^2 s^2-1)\U''(z) - \left(1 - \e^2 F'(\U(z)) \right) s \U'(z) =  F(\U(z))\, , \quad z\in  \R\, .
\end{equation}
We say that $\rho$ is a weak traveling front with speed $s$ if it can 
be written $\rho(t,x)=\U(x-st)$,  where $\U\in L^\infty(\R)$, $\U\geq 0$, $\U(-\infty)=1$, $\U(+\infty)=0$ and $\U$ satisfies \eqref{eqtw} in the sense of distributions: 
\begin{equation*} 
\forall \varphi \in \mathcal{D}(\R)\; , \quad  \int_\R \Big((\e^2 s^2-1)\U \varphi'' + \left(  \U - \e^2 F(\U) \right)s \varphi' - F(\U)\varphi \Big) dx = 0\, .
\end{equation*}
\end{defi}





In the following Section \ref{sec:num} we show some numerical simulations in order to illustrate our results. 
Section \ref{sec:TWexistence} is devoted to the proof of existence of the traveling fronts in the various regimes (resp. parabolic, hyperbolic, and supersonic). 
Finally, in Section \ref{sec:linstab} and Section \ref{sec:NLstab} we prove the stability of the traveling fronts having minimal speed $s^*(\eps)$. We begin with linear stability (Section \ref{sec:linstab}) since it is technically better tractable, and it let us discuss the case of the hyperbolic regime. We prove the full nonlinear stability in the range $\eps \in (0,1/\sqrt{F'(0)})$ (parabolic regime) in Section~\ref{sec:NLstab}.

\section{Numerical simulations}
\label{sec:num}

In this Section we perform numerical simulations of \eqref{eqprinc}. We choose a logistic reaction term: $F(\rho) = \rho(1-\rho)$. 
We first symmetrize the hyperbolic system \eqref{eq:telegraph intro} by introducing $f^+ = \frac12(\rho + j)$ and $f^- = \frac12(\rho - j)$. This results in the following system:
\begin{equation}
\label{eq:twospeeds}
\begin{cases}
\partial_t f^+(t,x) + \e^{-1}\partial_x f^+(t,x) = \dfrac{\e^{-2}}{2 } \left( f^-(t,x) - f^+(t,x) \right)+  \dfrac12 F(\rho(t,x)) \medskip\\
 \partial_t f^-(t,x) - \e^{-1}\partial_x f^-(t,x) = \dfrac{\e^{-2}}{2 } \left( f^+(t,x) - f^-(t,x) \right) +  \dfrac12 F(\rho(t,x))\, . 
\end{cases}
\end{equation}
In other words, the population is split into two subpopulations: $\rho = f^+ + f^-$, where the density $f^+$ denotes particles moving to the right with velocity $\e^{-1}$, whereas $f^-$ denotes particles moving to the left with the opposite velocity. 

We discretize the transport part using a finite volume scheme. Since we want to catch discontinuous fronts in the hyperbolic regime $\e^2F'(0)> 1$, we aim to avoid numerical diffusion. Therefore we use a nonlinear flux-limiter scheme \cite{Godlewski.Raviart,DespresLagoutiere}. The reaction part is discretized following the Euler explicit method.
\[  f^+_{n+1,i} = f^+_{n,i} - \e^{-1}\dfrac{\Delta t}{\Delta x}\left( f^+_{n,i} + p_i \dfrac{\Delta x}2 - f^+_{n,i-1} - p_{i-1}\dfrac{\Delta x}2 \right) + \e^{-2}\frac{\Delta t }{2 } \left( f_{n,i}^{-} - f_{n,i}^{+} \right) + \dfrac{\Delta t}{2} F(\rho_{n,i})\, .  \]
The non-linear reconstruction of the slope is given by 
\[ p_i = \minmod\left( \dfrac{f_{n,i}^{+} - f_{n,i-1}^{+}}{\Delta x}, \dfrac{f_{n,i+1}^{+} - f_{n,i}^{+}}{\Delta x} \right)\, , \quad \minmod(p,q) = \begin{cases} 0 & \mbox{if} \; \sign(p) \neq \sign(q) \\
\min(|p|,|q|)\sign(p) & \mbox{if} \; \sign(p) = \sign(q) \end{cases} \]
We compute the solution on the interval $(a,b)$ with the following boundary conditions: $f^+(a) = 1/2$ and $f^-(b) = 0$. 
The discretization of the second equation for $f^-$ \eqref{eq:twospeeds} is similar. The CFL condition reads $ \Delta t < \e \Delta x $. It degenerates when $\eps\searrow 0$, but we are mainly interested in the hyperbolic regime when $\eps$ is large enough. Other strategies should be used in the diffusive regime $\eps \ll 1$, e.g. asymptotic-preserving schemes (see \cite{FilbetJin,Carrillo.Goudon.Lafitte} and references therein).

Results of the numerical simulations in various regimes (parabolic and hyperbolic) are shown in Figure~\ref{fig:numerics}.

\begin{figure}
\begin{center}
\subfigure[$\e = 1/2$]
{
\includegraphics[width = .45\linewidth]{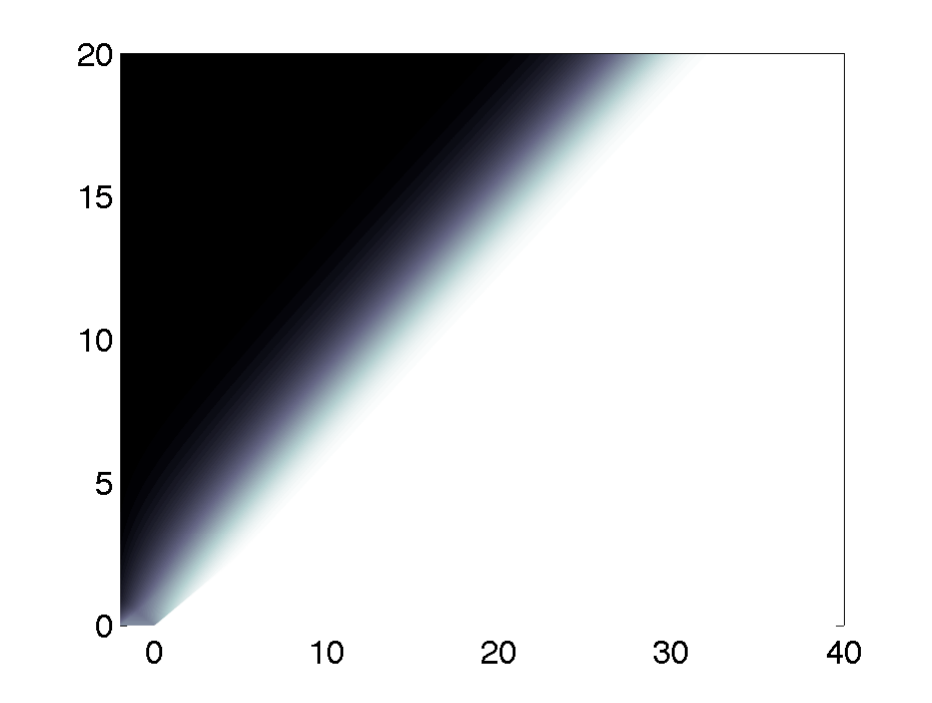} \; 
\includegraphics[width = .45\linewidth]{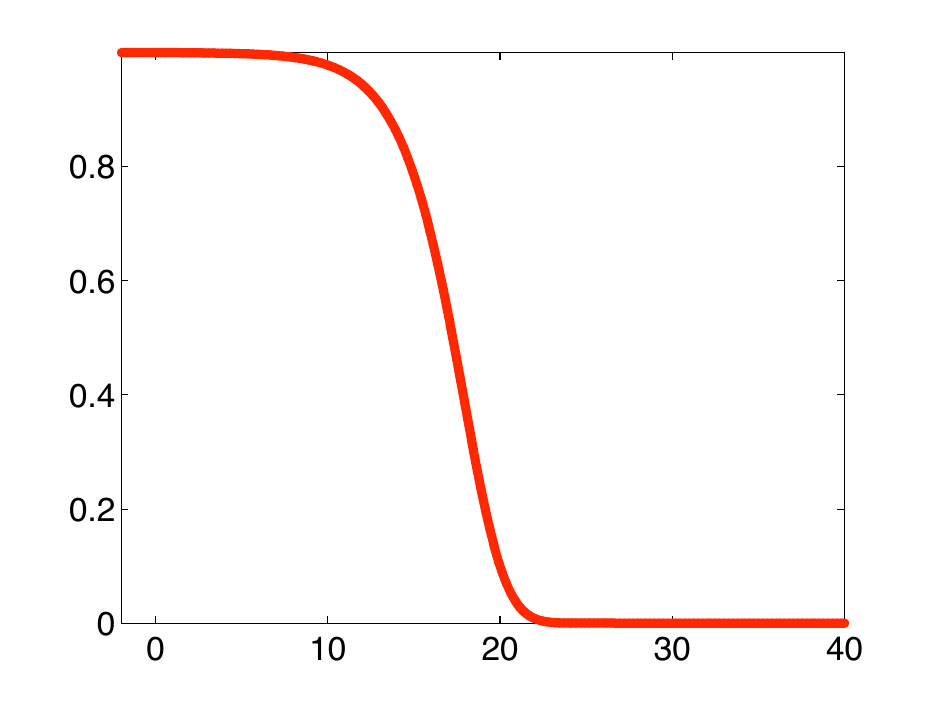}
}  
\subfigure[$\e = 1$]
{
\includegraphics[width = .45\linewidth]{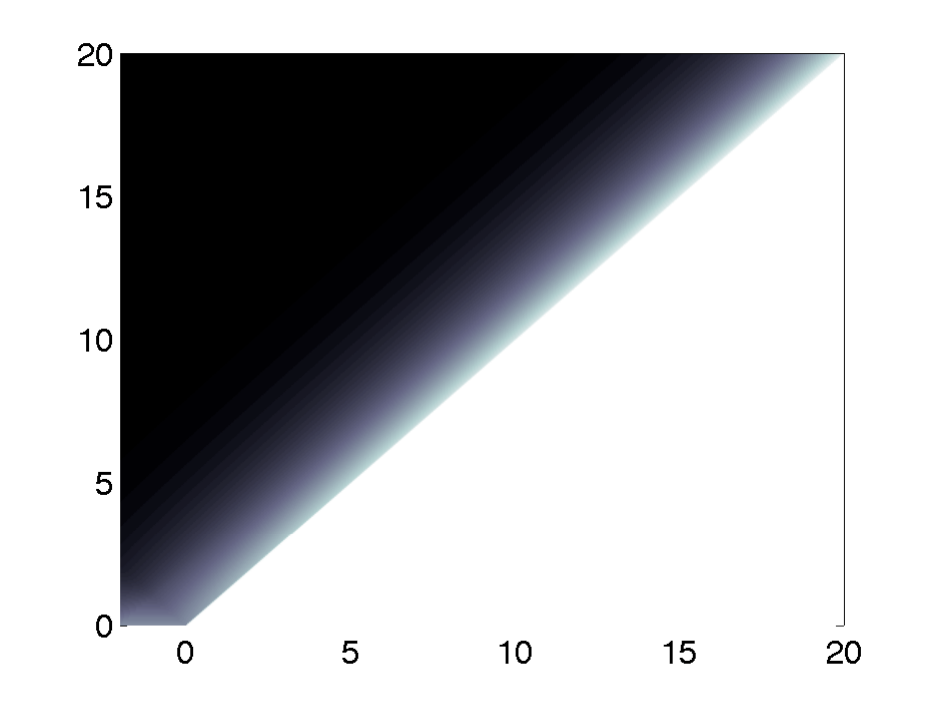} \; 
\includegraphics[width = .45\linewidth]{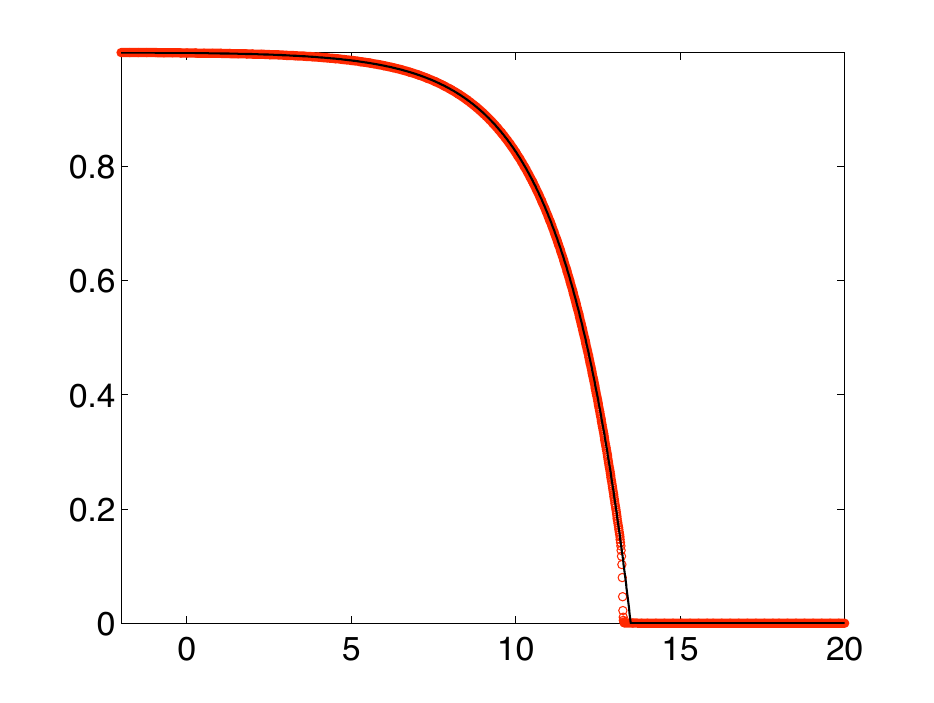}
}
\subfigure[$\e = 2$]{
\includegraphics[width = .45\linewidth]{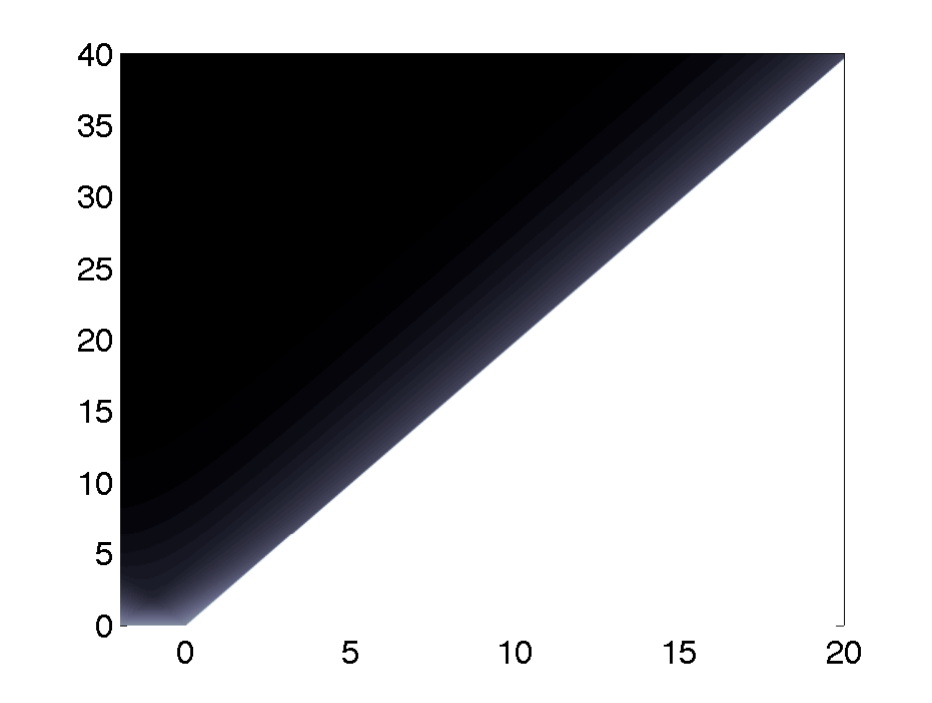} \; 
\includegraphics[width = .45\linewidth]{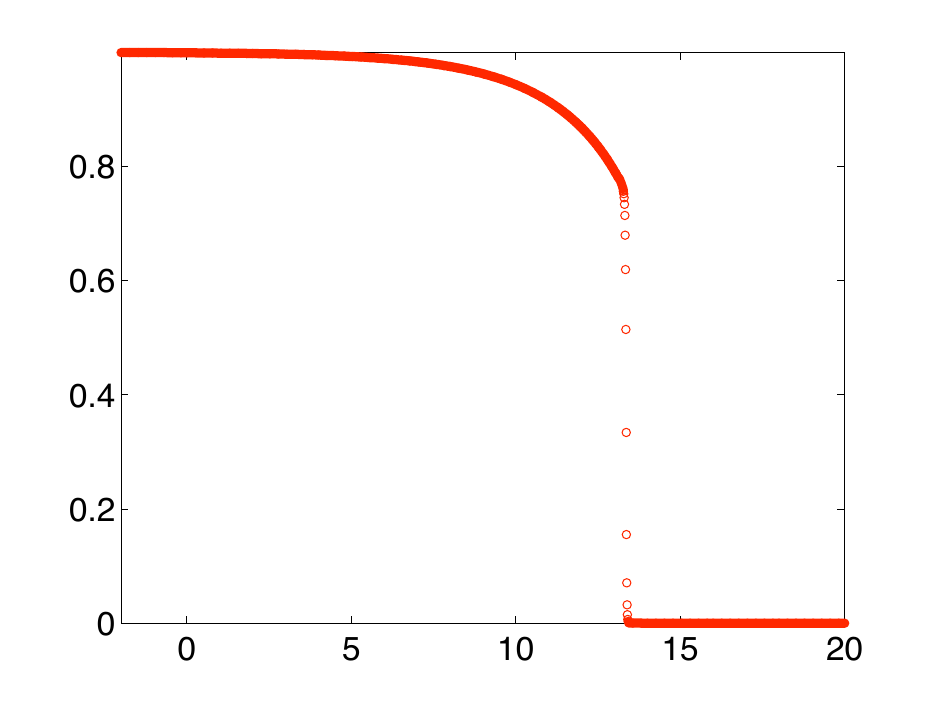}
}
\caption{Numerical simulations of the equation \eqref{eqprinc} for $F(\rho) = \rho(1-\rho)$ and for different values of $\eps = 0.5, 1, 2$. Numerical method is described in Section \ref{sec:num}. The initial data is the step function $f^+(x<0) = 1$, $f^+(x>0) = 0$, and $f^- \equiv 0$. For each value of $\eps$ we plot the density function $\rho = f^+ + f^-$ in the $(x,t)$ space, and the density $\rho(t_0,\cdot)$ at some chosen time $t_0$. We clearly observe in every cases a front traveling asymptotically at speed $s^*(\e)$ as expected. We also observe the transition between a smooth front and a discontinuous one. The transition occurs at $\eps = 1$. In the case $\eps = 1$ we have superposed the expected profile $\nu(z) = \left(1 -e^{z/2}\right)_+$ in black, continous line, for the sake of comparison.}\label{fig:numerics}
\end{center}
\end{figure}


\section{Traveling wave solutions: Proof of Theorems \ref{thm-diffusion} and \ref{thm-hyperbolic} }

\label{sec:TWexistence}

\subsection{Characteristic equation}

\label{ssec:phase}

We begin with a careful study of the linearization of \eqref{eqtw} around $\U\approx 0$. We expect an exponential decay $e^{-\lambda z}$ as $z\to +\infty$. The characteristic equation  reads as follows,
\begin{equation} (\e^2 s^2 - 1) \lambda^2 +  (1 - \e^2 F'(0)) s  \lambda - F'(0) = 0\, . 
\label{eq:charac}
\end{equation}
The discriminant is $\Delta = \left(\e^2 F'(0) + 1\right)^2 s^2 - 4 F'(0)$. Hence we expect an oscillatory behaviour in the case $\Delta <0$, i.e. $ s < s^*(\e) $. We assume henceforth $s\geq s^*(\e)$. In the case $s < \e^{-1}$ ({\em subsonic fronts}) we have to distinguish between the {\em parabolic regime} $\e^2 F'(0) < 1 $ and the {\em hyperbolic regime} $\e^2 F'(0) > 1$. In the former regime equation \eqref{eq:charac} possesses two positive roots, accounting for a damped behaviour. In the latter regime equation  \eqref{eq:charac} possesses two negative roots. In the case $s> \e^{-1}$ ({\em supersonic fronts}) we  get two roots having opposite signs.

Next we investigate the linear behaviour close to $\U\approx1$. We expect an exponential relaxation $1 - e^{\lambda' z}$ as $z\to -\infty$.
The characteristic equation reads as follows,
\begin{equation} (\e^2 s^2 - 1) \lambda'^2 -   (1 - \e^2 F'(1) )  s      \lambda' - F'(1) = 0\, . 
\label{eq:charac2}
\end{equation}
We have $\Delta' = [\e^2 F'(1) + 1]^2 s^2  - 4 F'(1)>0$. In the case $s < \e^{-1}$ equation \eqref{eq:charac2} possesses two roots having opposite signs. In the case $s > \e^{-1}$ it has two positive roots.

 We summarize our expectations about the possible existence of nonnegative traveling fronts in Table \ref{tab:phase}.
 \begin{table}
 \begin{center}
 \begin{tabular}{c|c|c}
   & $s < \e^{-1}$ & $s>\e^{-1}$ \\
 \hline  & & \\
 parabolic 
 & \begin{minipage}{0.25\linewidth} 
 \begin{center} if $s< s^*(\e)$, NO  \\ \includegraphics[width = \linewidth]{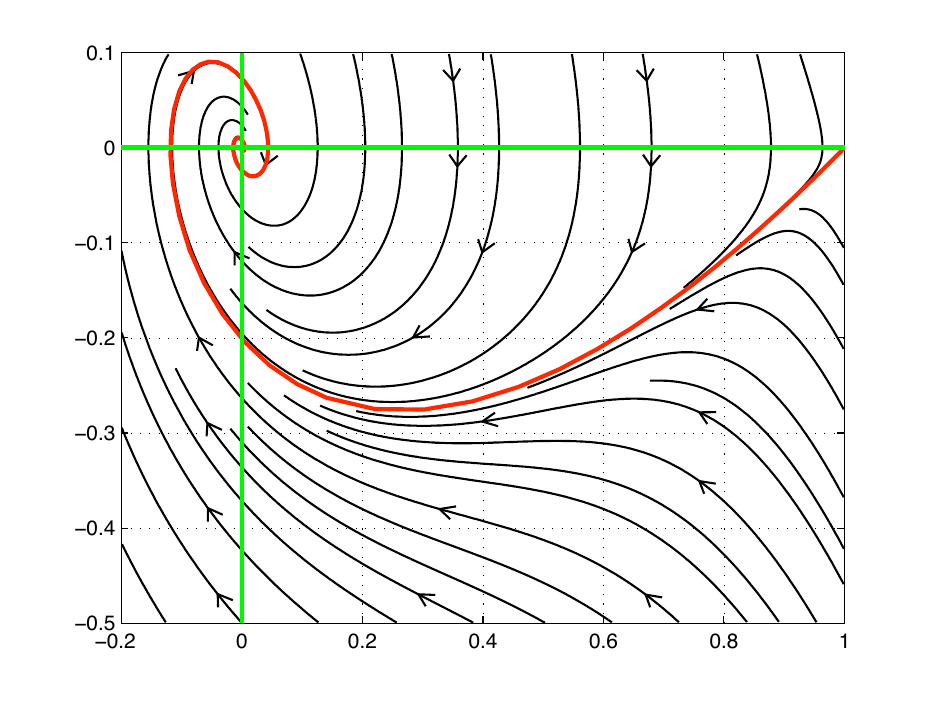} \end{center}\end{minipage}\; \begin{minipage}{0.25\linewidth}\begin{center} if $s\geq s^*(\e)$, YES \\ \includegraphics[width = \linewidth]{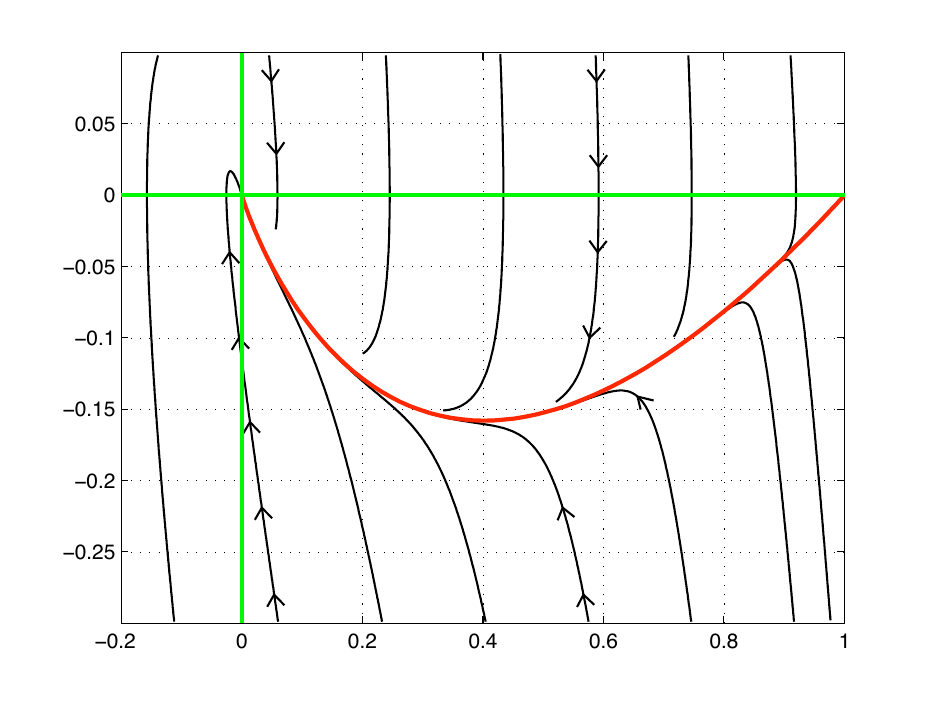}\end{center}\end{minipage} 
 & \begin{minipage}{0.25\linewidth}\begin{center}  YES \\ \includegraphics[width = \linewidth]{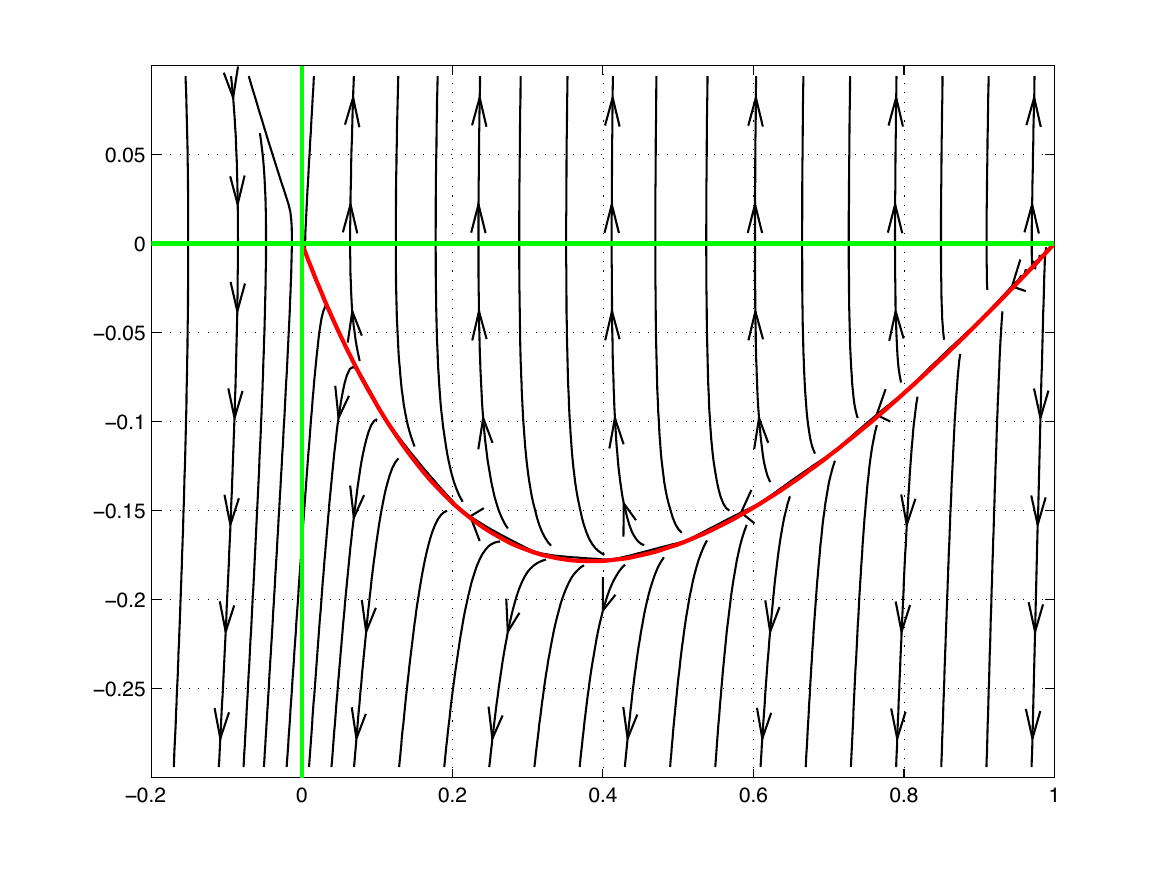}\end{center}\end{minipage}\\
 \hline & & \\
 hyperbolic 
 & \begin{minipage}{0.25\linewidth}\begin{center}  NO \\ \includegraphics[width = \linewidth]{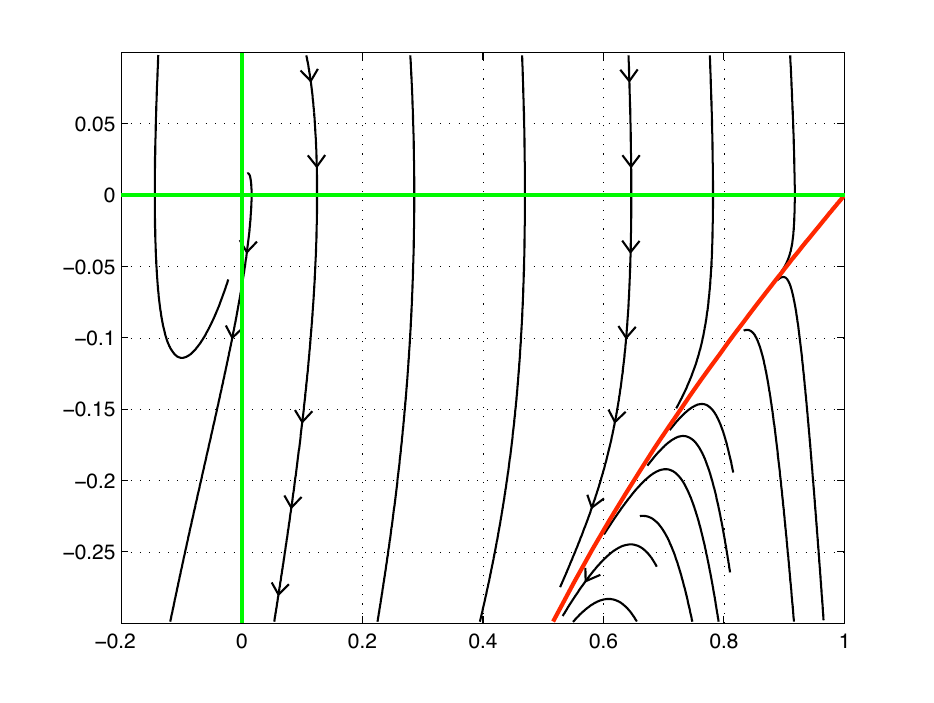}\end{center}\end{minipage}
 & \begin{minipage}{0.25\linewidth}\begin{center}  YES \\ \includegraphics[width = \linewidth]{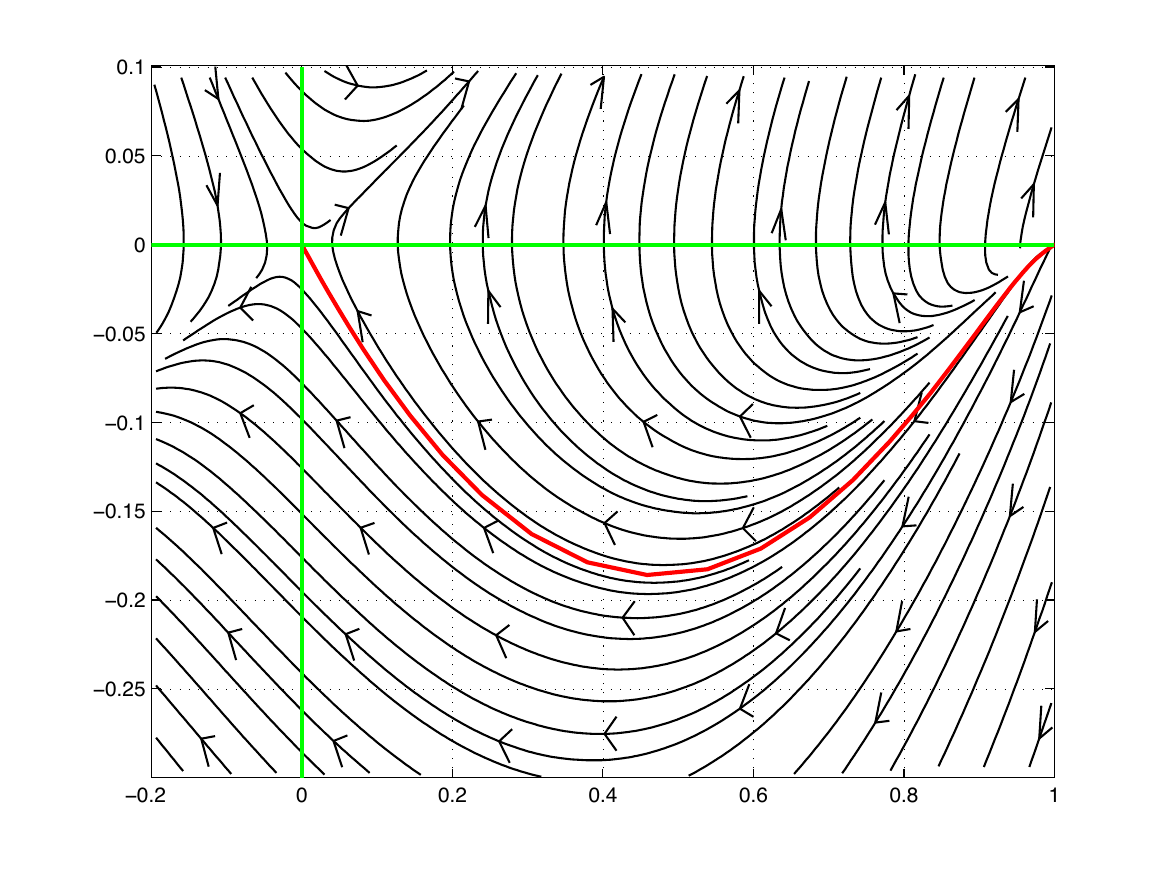}\end{center}\end{minipage}
 \end{tabular}
 \end{center}
 \caption{Phase plane dynamics depending on the regime (parabolic vs. hyperbolic) and the value of the speed with respect to $s^*(\e)$ and $\e^{-1}$. In every picture the red line represents the traveling front trajectory, and the green lines are the axes $\{u = 0\}$ and $\{v = 0\}$. We do not consider the case $s = \e^{-1}$ since the dynamics are singular in this case and should be considered separately (see Section \ref{ssec:weakTW}).}
 \label{tab:phase}
 \end{table}

\subsection{Proof of Theorems \ref{thm-diffusion}.(a) and \ref{thm-hyperbolic}.(a): Obstruction for $s < s^*(\e)$}

In this section we prove that no traveling front solution exists if the speed is below  $s^*(\e)$.

\begin{prop} \label{prop-nonex}
There exists no traveling front with speed $s$ for
$s< s^*(\e)$, where $s^*(\e)$ is given by \eqref{eq:minspeed 1}-\eqref{eq:minspeed 2}.
\end{prop}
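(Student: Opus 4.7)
The plan is to argue by contradiction, leveraging the characteristic analysis of Section \ref{ssec:phase}. Suppose a traveling front $\U$ of speed $s<s^*(\e)$ exists, and consider the characteristic polynomial
\[
P(\lambda) = (\e^2 s^2 - 1)\lambda^2 + (1 - \e^2 F'(0))s\lambda - F'(0)
\]
of \eqref{eq:charac}, associated with the linearisation of \eqref{eqtw} at $\U=0$. The crucial observation, already implicit in Section \ref{ssec:phase}, is that for $s<s^*(\e)$ the polynomial $P$ has no root in $(0,+\infty)$: its discriminant is strictly negative in the parabolic regime, and both real roots are negative in the hyperbolic subsonic regime. Since $P(0)=-F'(0)<0$ and the leading coefficient $(\e^2 s^2-1)$ is negative for $s<\e^{-1}$, this yields $P(\lambda)<0$ for every $\lambda>0$.

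For smooth fronts this alone suffices. Writing \eqref{eqtw} as the planar system $\U'=\uu$, $(\e^2 s^2-1)\uu'=(1-\e^2 F'(\U))s\uu+F(\U)$, the linearisation at $(0,0)$ has eigenvalues given by the roots of $P$. In the parabolic subsonic case these are complex conjugate, so $(0,0)$ is a focus; any orbit approaching it must spiral, contradicting $\U\geq 0$. In the hyperbolic subsonic case both eigenvalues are negative, so $(0,0)$ is a source in the $z\to+\infty$ direction and no orbit can reach it. Either way the heteroclinic connection from $(1,0)$ to $(0,0)$ fails to exist.

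For weak fronts I would use a Laplace-transform argument. First, a barrier constructed from the linear upper bound $F(\U)\leq F'(0)\U$ (concavity of $F$ with $F(0)=0$) forces $\U(z)\leq Ce^{-\lambda_0 z}$ at $+\infty$ for some $\lambda_0>0$, so that $\widehat{\U}(\lambda):=\int_0^{+\infty}e^{-\lambda z}\U(z)\,dz$ is finite for $\lambda$ large. Next, test the distributional identity of Definition~\ref{def-smoothtw} against a truncated exponential $\varphi_R(z)=e^{-\lambda z}\chi_R(z)$, use $F(\U)\leq F'(0)\U$ to linearise, and pass to the limit $R\to+\infty$; this yields an inequality of the form $P(\lambda)\widehat{\U}(\lambda)\geq 0$, up to boundary contributions controlled by $\U(-\infty)=1$. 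Since $P(\lambda)<0$ on $(0,+\infty)$ and $\widehat{\U}(\lambda)>0$, this is the desired contradiction.

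The main obstacle will be the bookkeeping of the nonlinear transport term $s\e^2 F(\U)\varphi'$ in the weak formulation, together with the control of the truncation remainder near $-\infty$ where $\U\to 1$. The pointwise bound $F(\U)\leq F'(0)\U$ reduces everything to the linear characteristic polynomial, but signs must be tracked carefully so that the residual boundary term after truncation does not spoil the contradiction. In the hyperbolic regime the barrier comparison must be performed at the distributional level, which is a routine regularisation once the strict negativity of $P$ on $(0,+\infty)$ is in hand.
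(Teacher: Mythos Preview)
Your phase-plane argument for smooth fronts is correct and is the classical route: in the parabolic subsonic case the origin is a stable spiral and any orbit reaching it must cross $\{\U=0\}$, while in the hyperbolic subsonic case the origin is an unstable node and cannot be an $\omega$-limit point. This is a genuinely different approach from the paper's. The paper does not use local phase-plane structure; instead it runs a renormalisation argument: it sets $\lambda=\liminf_{z\to+\infty}(-\U'/\U)\geq 0$, rescales $\U_n(z)=\U(z+z_n)/\U(z_n)$ along a sequence realising the $\liminf$, passes to a limit $\U_\infty$ solving the linearised equation, and then reads off that $\U_\infty$ is a pure exponential with rate $\lambda$, forcing $\lambda$ to be a nonnegative root of the characteristic polynomial---which does not exist for $s<s^*(\e)$. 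The paper's method is more robust (it does not rely on two-dimensional phase-plane topology and would generalise to higher-order or nonautonomous settings), while yours is more elementary and transparent in this ODE context.

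Your separate Laplace-transform treatment of weak fronts is unnecessary and is the weakest part of the proposal. Since $s<s^*(\e)\leq \e^{-1}$, the leading coefficient $\e^2 s^2-1$ is nonzero, so \eqref{eqtw} is a genuine second-order ODE with smooth coefficients; any distributional solution is automatically $C^2$ by standard bootstrapping (this is exactly what the paper invokes in one line: ``classical regularity estimates show that $\U$ is smooth''). Once you note this, the smooth-front argument covers everything, and you can drop the Laplace machinery, the truncation bookkeeping, and the barrier construction entirely. If you keep the phase-plane proof, do add the short justification that $(\U,\U')\to(0,0)$ as $z\to+\infty$ (e.g.\ via monotonicity of $\U$ or an $\omega$-limit set argument), since the spiral/source dichotomy is applied at that limit point.
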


\begin{rmq}
 Note that the proof below works in both cases $\e^2 F'(0)<1$ and $\e^2 F'(0)\geq 1$. 
\end{rmq} 

\begin{proof} We argue by contradiction. The obstruction comes from the exponential decay at $+\infty$. 
Assume that there exists such a traveling front $\nu(z)$. 
As $s<s^*(\e)$, one has $s<\e^{-1}$ in the parabolic as well as in the hyperbolic regime. Hence, as $\nu$ is bounded and 
satisfies the elliptic equation 
\eqref{eqtw} in the sense of distributions, classical regularity estimates show that $\U$ is smooth. It is necessarily decreasing as soon at it is below $1$. 
Otherwise, it would reach a local minimum at some point $z_0\in\R$, for which $\U(z_0)<1$, $\U'(z_0) = 0$ and $\U''(z_0)\geq 0$. It would then follow from
\eqref{eqtw} that $F(\U(z_0))\leq 0$ and thus $\U(z_0)=0$. As $F\in\mathcal{C}^1 ([0,1])$, the Cauchy-Lipschitz theorem would imply $\U\equiv 0$, a contradiction.

Next, we define the exponential rate of decay at $+\infty$: 
\begin{equation*}
\lambda:= \liminf_{z\to +\infty} \frac{-\U'(z)}{\U (z)}\geq 0\, .
\end{equation*}
Consider a sequence $z_n\to+\infty$ such that $-\U'(z_n)/\U(z_n)\to \lambda$ and define the renormalized shift:
\[
\U_n(z):=\frac{\U (z+z_n)}{\U(z_n)}\,.
\]
This function is locally bounded by classical Harnack estimates. It satisfies
\[(\e^2 s^2-1)\U_n''(z) +\left(\e^2 F'(\U(z+z_n))-1\right)s \U_n'(z) =  \frac{1}{\U (z_n)}F(\U(z_n)\U_n(z))\, , \quad z\in \R\,.\]
As $F\in\mathcal{C}^1 ([0,1])$, $F(0)=0$ and $F$ is concave, the functions 
$z\mapsto \left(\e^2 F'(\U(z+z_n))-1\right)s$ and $z\mapsto \frac{1}{\U (z_n)}F(\U(z_n)\U_n(z))$ are uniformly bounded, uniformly in $n$. Hence, 
Schauder elliptic regularity estimates yield that the sequence $(\U_n)_n$ is locally bounded in the H\"older space $\mathcal{C}^\alpha (K)$ for any 
compact subset $K\subset \R$ and any $\alpha \in (0,1)$. The Ascoli theorem and a diagonal extraction process give an extraction, that we still denote
$(\U_n)_n$, such that $(\U_n)_n$ converges to some function $\U_\infty$ in $\mathcal{C}^\alpha (K)$ for any 
compact subset $K\subset \R$ and any $\alpha \in (0,1)$. The limiting function is a solution in the sense of distributions of
\begin{equation} \label{eq-nonexuinfty} (\e^2 s^2-1)\U_\infty''(z) +\left(\e^2 F'(0)-1\right)s \U_\infty'(z) =  F'(0)\U_\infty(z)\, , \quad z\in \R\, .\end{equation}
As this equation is linear, one has $\U_\infty \in\mathcal{C}^\infty (\R)$. If $\U_\infty (z_0)=0$, then as $\U_\infty$ is nonnegative, one would get $\U_\infty'(z_0)=0$ and thus
$\U_\infty\equiv 0$ by uniqueness of the Cauchy problem, which would be a contradiction since $\U_\infty (0)=\lim_{n\to +\infty} \U_n (0)=1$. Thus 
$\U_\infty$ is positive. 

Define $V=\U_\infty'/\U_\infty$. The definition of $\lambda$ yields 
$\min_{\R} V= V(0) =-\lambda$. Thus $V'(0) = 0$. 
Hence we deduce from \eqref{eq-nonexuinfty} that $\U_\infty(z) = \U_\infty(0) e^{-\lambda z}$. Plugging this into \eqref{eq-nonexuinfty}, 
we obtain that $\lambda$ satisfies the following second order equation,
\[(\e^2 s^2-1)\lambda^2 -(\e^2 F'(0)-1)s \lambda -  F'(0)=0\, .\] We know from Section \ref{ssec:phase} that, both in the parabolic and hyperbolic regimes, 
there is no real root in the case $s< s^*(\e)$. 
\end{proof}


\subsection{Proof of Theorem \ref{thm-diffusion}.(b): Existence of smooth traveling fronts in the parabolic regime $s\in [s^*(\e),\e^{-1})$}

\label{ssec:sub-sup}

In \cite{Hadeler} the author proves the existence of traveling front, by reducing the problem to the classical Fisher-KPP problem. 
It is required that the nonlinear function $1 - \e^2F'(\rho)$ remains positive, which reads exactly $\e^2 F'(0)<1$ in our context.
We present below a direct proof based on the method of sub- and supersolutions, following the method developed by Berestycki and Hamel in \cite{BerestyckiHamel}.

\subsubsection{The linearized problem}


%


\begin{prop} \label{prop-supersol}
 Let $\lambda_s$ be the smallest (positive) root of the characteristic polynomial \eqref{eq:charac}. Then 
$\overline{\U}(z)=\min \{1, e^{-\lambda_s z}\}$ is a supersolution  of (\ref{eqtw}).
\end{prop}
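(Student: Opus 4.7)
The plan is to verify the supersolution inequality
\[(\e^2 s^2 - 1)\overline{\U}''(z) - (1 - \e^2 F'(\overline{\U}(z)))\,s\,\overline{\U}'(z) - F(\overline{\U}(z)) \geq 0,\]
understood in the distributional (or viscosity) sense at the corner $z=0$, on each of the two smooth pieces of $\overline{\U}$ separately. On the flat piece $\{z\leq 0\}$ one has $\overline{\U}\equiv 1$ and $F(1)=0$, so the left-hand side vanishes identically. At the junction $z=0$, $\overline{\U}$ is continuous with left derivative $0$ and right derivative $-\lambda_s$, hence $\overline{\U}''$ carries a Dirac mass $-\lambda_s\delta_0$; multiplied by the coefficient $\e^2 s^2 - 1<0$ of the subsonic regime, this contributes a nonnegative Dirac to the left-hand side, preserving the inequality. (Equivalently, one can invoke the standard fact that a pointwise minimum of two supersolutions is a supersolution.)

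The core computation is on the exponential piece $\{z>0\}$, where $\overline{\U}(z)=e^{-\lambda_s z}$, $\overline{\U}'=-\lambda_s\overline{\U}$ and $\overline{\U}''=\lambda_s^2\overline{\U}$. Substituting and dividing through by $\overline{\U}>0$ reduces the claim to
\[(\e^2 s^2-1)\lambda_s^2 + (1-\e^2 F'(\overline{\U}))s\lambda_s - \frac{F(\overline{\U})}{\overline{\U}} \geq 0.\]
By the defining characteristic equation \eqref{eq:charac}, $(\e^2 s^2-1)\lambda_s^2 + (1-\e^2 F'(0))s\lambda_s = F'(0)$. Substituting this, the left-hand side rewrites as
\[\Bigl(F'(0) - \frac{F(\overline{\U})}{\overline{\U}}\Bigr) + \e^2 s\lambda_s\bigl(F'(0) - F'(\overline{\U})\bigr).\]
Concavity of $F$ combined with $F(0)=0$ yields $F(\overline{\U})/\overline{\U} \leq F'(0)$ (chord below tangent at the origin) and $F'(\overline{\U}) \leq F'(0)$ (derivative non-increasing), so both parentheses are nonnegative. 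With $s,\lambda_s,\e^2>0$, the desired inequality follows.

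The main point requiring care is tracking the sign of $\e^2 s^2-1$: because we work in the subsonic regime $s<\e^{-1}$, this coefficient is negative, which (i) fixes the orientation of the supersolution inequality relative to the original form of \eqref{eqtw}, and (ii) makes the concave corner of $\overline{\U}$ at $z=0$ compatible with the supersolution property rather than an obstruction. Once these signs are settled, the proof is purely algebraic: subtract the characteristic equation and apply monostable concavity term by term.
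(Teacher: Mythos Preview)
Your proof is correct and follows essentially the same route as the paper. The paper argues more tersely: $r(z)=e^{-\lambda_s z}$ is a supersolution because $r$ is decreasing and $F$ is concave, the constant $1$ is a solution, and the minimum of two supersolutions is again a supersolution; you unpack the first step explicitly via the characteristic equation and handle the corner both distributionally and by invoking the same min-of-supersolutions fact.
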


\begin{proof} 
Let $r(z)=e^{-\lambda_s z}$. Then as $r$ is decreasing and $F$ is concave, it is easy to see that $r$ is a supersolution of (\ref{eqtw}). 
On the other hand, the constant function $1$ is clearly a solution of (\ref{eqtw}). 
We conclude since the minimum of two supersolutions is a supersolution. 
\end{proof}

\subsubsection{Resolution of the problem on a bounded interval}

\begin{prop}\label{prop-pbma}
 For all $a>0$ and $\tau\in \R$, there exists a solution $\U_{a,\tau}$ of 
\begin{equation}\label{eq-pbma}
 \left\{ \begin{array}{l}
          (\e^2 s^2-1)\U_{a,\tau}'' +(\e^2 F'(\U_{a,\tau})-1)s \U_{a,\tau}' =  F(\U_{a,\tau}) \hbox{ in } (-a,a),\\
\U_{a,\tau}(-a)=\overline{\U} (-a+\tau),\\
\U_{a,\tau}(a)=\overline{\U} (a+\tau).\\ 
         \end{array}\right.
\end{equation}
Moreover, this function is nonincreasing over $(-a,a)$ and it is unique in the class of nonincreasing functions.
\end{prop}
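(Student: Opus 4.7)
\emph{Plan.} I would solve \eqref{eq-pbma} by the classical method of sub- and supersolutions, using that in the subsonic parabolic regime $s<\e^{-1}$ the coefficient $\e^2 s^2-1$ is strictly negative, so that (after multiplying by $-1$) the equation is a uniformly elliptic quasilinear second-order ODE. For the ordered pair I take $\underline{\U}\equiv 0$, which is a classical solution of \eqref{eqtw} and hence a subsolution, together with the translate $\overline{\U}_\tau(z):=\overline{\U}(z+\tau)$ of the function of Proposition~\ref{prop-supersol}: since \eqref{eqtw} is autonomous, $\overline{\U}_\tau$ is still a supersolution, and by construction $\overline{\U}_\tau(\pm a)=\overline{\U}(\pm a+\tau)$ coincides with the prescribed boundary data while $\underline{\U}\le\overline{\U}_\tau$ on $[-a,a]$.

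Existence of $\U_{a,\tau}$ I would obtain by a Schauder fixed-point argument. Pick $K>0$ large enough that $u\mapsto Ku+F(u)$ is non-decreasing on $[0,1]$ and big enough to absorb an extra term coming from the quasilinear first-order coefficient, and for $v\in C^1([-a,a])$ with $0\le v\le\overline{\U}_\tau$ let $T(v)=\U$ be the unique solution of the \emph{linear} two-point problem obtained by freezing the nonlinearities,
\[
(1-\e^2 s^2)\U''+(1-\e^2 F'(v))s\U'-K\U=-\bigl(Kv+F(v)\bigr),\qquad \U(\pm a)=\overline{\U}(\pm a+\tau).
\]
The associated linear operator has strictly negative zero-order coefficient $-K$ and hence obeys the maximum principle; standard ODE estimates make $T$ compact on $C^1([-a,a])$; and a careful comparison (using $0\le v\le\overline{\U}_\tau$, the concavity of $F$ and the non-increasingness of $\overline{\U}_\tau$) shows that $T$ sends the order interval $[0,\overline{\U}_\tau]$ into itself, so that Schauder's theorem provides a fixed point $\U_{a,\tau}\in[0,\overline{\U}_\tau]$ solving \eqref{eq-pbma}.

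For the monotonicity of $\U_{a,\tau}$ in $z$ I would use a phase-plane argument: at any interior critical point $z^*$ the equation reduces to $(\e^2 s^2-1)\U''_{a,\tau}(z^*)=F(\U_{a,\tau}(z^*))$, so that a local minimum would force $F(\U_{a,\tau}(z^*))\le 0$, hence $\U_{a,\tau}(z^*)\in\{0,1\}$; uniqueness of the Cauchy problem for the ODE would then give $\U_{a,\tau}\equiv 0$ or $\equiv 1$, contradicting the boundary data $\overline{\U}(\pm a+\tau)\in(0,1]$. Combined with the non-increasingness of $\overline{\U}_\tau$, this rules out non-monotone profiles. Uniqueness within the class of non-increasing solutions would then follow by writing $w=\U_1-\U_2$ for two such solutions, using Taylor expansions to reduce to a linear second-order equation whose coefficients are controlled thanks to $\U_i'\le 0$ and $F''\le 0$, and applying the maximum principle with $w(\pm a)=0$. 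The main technical obstacle throughout is the quasilinear first-order term $(1-\e^2 F'(\U))s\U'$: because its coefficient depends on the unknown and is not sign-definite, one cannot apply textbook semilinear sub-/supersolution machinery directly, which is precisely what necessitates the freezing procedure in the Schauder scheme and the careful sign analysis in the monotonicity and uniqueness arguments.
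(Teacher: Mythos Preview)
Your existence and monotonicity arguments take a genuinely different route from the paper's and are essentially sound. The paper builds the solution by a \emph{monotone iteration}: starting from $\nu_0=\overline\nu(\cdot+\tau)$, it solves at each step the linear problem \eqref{eq-rhon} with the first-order coefficient frozen at $F'(\nu_n)$, and proves by induction that the sequence $(\nu_n)$ is decreasing in $n$ and that each $\nu_n$ is nonincreasing in $z$, the latter via a \emph{sliding argument}. Your Schauder scheme with the same freezing is a legitimate alternative for existence, and the invariance of the order interval $[0,\overline\nu_\tau]$ under $T$ does follow from the signs you list ($F$ concave, $\overline\nu_\tau'\le 0$, $v\le\overline\nu_\tau$). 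Your monotonicity argument --- excluding interior minima by evaluating the ODE at a critical point, then using the bound $\nu_{a,\tau}\le\overline\nu_\tau$ together with the matching boundary values to exclude interior maxima --- is more elementary than the paper's sliding and is correct. What you gain is brevity; what the paper's iteration buys is that monotonicity in $z$ is built in at every step, so it comes for free in the limit.

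Your uniqueness argument, however, has a genuine gap. Linearising around two nonincreasing solutions, the difference $w=\nu_1-\nu_2$ satisfies
\[
(\e^2s^2-1)w'' + (\e^2F'(\nu_1)-1)s\,w' + \bigl[\e^2 s\,F''(\xi_1)\,\nu_2' - F'(\xi_2)\bigr]w = 0,\qquad w(\pm a)=0,
\]
for suitable intermediate points $\xi_1,\xi_2$. The properties $\nu_i'\le 0$ and $F''\le 0$ that you invoke make the first summand of the zero-order coefficient nonnegative, but they say nothing about $-F'(\xi_2)$: near $z=a$ (where $\nu_i$ are small) one has $F'(\xi_2)\approx F'(0)>0$, hence the zero-order coefficient is negative there, while near $z=-a$ it is positive. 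After putting the operator in elliptic form (leading coefficient $1-\e^2s^2>0$) the zero-order coefficient is therefore \emph{not} of a definite sign, so the weak maximum principle does not apply as stated. This is exactly why the paper reuses the sliding method for uniqueness (comparing $\nu_1$ with translates of $\nu_2$). You can repair your argument either by that sliding, or by introducing an exponential weight $\tilde w=e^{\mu z}w$ and checking that the transformed zero-order coefficient can be made nonpositive uniformly (this works precisely because $s\ge s^*(\e)$, but it is not a one-line observation).
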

In order to prove this result, we consider the following sequence of problems:
\begin{itemize} 
\item $\U_0(z)=\overline{\U}(z+\tau)$
\item $\U_{n+1}$ is solution to 
\begin{equation}\label{eq-rhon}
\begin{cases}
          (\e^2 s^2-1)\U_{n+1}'' +(\e^2 F'(\U_n)-1)s \U_{n+1}'+M\U_{n+1} =  F(\U_n) +M\U_n \hbox{ in } (-a,a),\\
\U_{n+1}(-a)=\overline{\U} (-a+\tau),\\
\U_{n+1}(a)=\overline{\U} (a+\tau),\\ 
\end{cases}
\end{equation}
where $\overline{\U}$ is defined in Proposition \ref{prop-supersol} and $M>\frac{s^2}{2} \big( \e^2 F'(0)-1\big)$ 
is large enough so that $s\mapsto F(s)+Ms$ is increasing.
\end{itemize}

\begin{lem} \label{lem-subsuper}
 The sequence $(\U_n)_n$ is well-defined. The functions $z\mapsto \U_n(z)$ are nonincreasing and for all $z\in (-a,a)$, the sequence $(\U_n(z))_n$ is nonincreasing. 
\end{lem}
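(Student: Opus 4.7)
The plan is to prove, by a joint induction on $n$, that each $\U_n$ exists in $\mathcal{C}^2([-a,a])$, takes values in $[0,1]$, is nonincreasing in $z$, and satisfies $\U_{n+1}\le \U_n$. The base case $n=0$ is immediate from $\U_0=\overline{\U}(\cdot+\tau)$ and Proposition \ref{prop-supersol}. For the well-posedness of $\U_{n+1}$: since $s<\e^{-1}$, one has $\e^2 s^2-1<0$, so the defining equation is linear, second-order, with bounded coefficients (by the induction hypothesis $\U_n\in[0,1]$) and positive zeroth-order coefficient $M$. Unique existence of a $\mathcal{C}^2$ solution follows from the standard linear theory (Lax--Milgram after an affine lift of the boundary data, then elliptic bootstrap). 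The bounds $0\le \U_{n+1}\le 1$ come from comparing with the constants $0$ and $1$, both subsolutions of the linear equation thanks to $F(\U_n)\ge 0$ and $\U_n\le 1$.

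The comparison $\U_{n+1}\le \U_n$ is obtained by showing that $\U_n$ is itself a supersolution of the nonlinear traveling-wave equation \eqref{eqtw}. Given this, $\mathcal{L}_n \U_n\ge F(\U_n)+M\U_n$, where
\[
\mathcal{L}_n v := (\e^2 s^2-1)v''+(\e^2 F'(\U_n)-1)sv'+Mv,
\]
so $\mathcal{L}_n(\U_n-\U_{n+1})\ge 0$ with vanishing boundary data, and the maximum principle for $\mathcal{L}_n$ (valid since $M>0$ and the principal coefficient has a definite sign) yields $\U_n\ge \U_{n+1}$. To propagate the supersolution property from $\U_n$ to $\U_{n+1}$, I would substitute the linear equation for $\U_{n+1}$ into the nonlinear operator, obtaining
\[
(\e^2 s^2-1)\U_{n+1}''+(\e^2 F'(\U_{n+1})-1)s\U_{n+1}'-F(\U_{n+1}) = \bigl[F(\U_n)+M\U_n-F(\U_{n+1})-M\U_{n+1}\bigr]+\e^2 s\bigl(F'(\U_{n+1})-F'(\U_n)\bigr)\U_{n+1}'.
\]
The first bracket is nonnegative because $\sigma\mapsto F(\sigma)+M\sigma$ is increasing and $\U_{n+1}\le \U_n$; the second is handled via concavity of $F$, the $z$-monotonicity $\U_{n+1}'\le 0$ (proven in parallel), and choosing $M$ large enough that the first term dominates the second in absolute value.

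The $z$-monotonicity is obtained by differentiating the equation for $\U_{n+1}$: setting $v=\U_{n+1}'$,
\[
(\e^2 s^2-1)v''+(\e^2 F'(\U_n)-1)s v'+\bigl(M+\e^2 s F''(\U_n)\U_n'\bigr)v=\bigl(F'(\U_n)+M\bigr)\U_n'.
\]
By the induction hypothesis $\U_n'\le 0$, and concavity gives $F''\le 0$; hence the zeroth-order coefficient is $\ge M>0$, and the right-hand side is nonpositive. The maximum principle then forbids a positive interior maximum of $v$. For the boundary: $\U_{n+1}\le \U_n\le \overline{\U}(\cdot+\tau)$ combined with the monotonicity of $\overline{\U}$ locates the maximum of $\U_{n+1}$ at $z=-a$, while comparing $\U_{n+1}$ with the constant $\overline{\U}(a+\tau)$ (a subsolution of $\mathcal{L}_n v=F(\U_n)+M\U_n$ because $\U_n\ge \overline{\U}(a+\tau)$ by induction and $F\ge 0$) locates the minimum at $z=a$. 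One-sided difference quotients yield $\U_{n+1}'(\pm a)\le 0$, and combined with the interior analysis, $\U_{n+1}'\le 0$ on $[-a,a]$.

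The main obstacle is the interplay between the two monotonicity claims inside the induction: the supersolution inheritance (used for monotonicity in $n$) relies on the bound $\U_{n+1}'\le 0$ coming from the $z$-monotonicity, while the boundary analysis (used for $z$-monotonicity) relies on the pointwise inequality $\U_{n+1}\le \U_n$. The proof must therefore run both inductions in parallel, carefully ordering the substeps so that each invocation uses only facts established at indices $\le n$ or earlier at the current level.
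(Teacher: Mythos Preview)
Your inductive scheme is structurally the same as the paper's, but two substeps differ, and one of them has a real gap.

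For the $z$-monotonicity of $\U_{n+1}$, the paper does not differentiate the equation. It uses a sliding argument: setting $\U_{n+1}^\lambda(z)=\U_{n+1}(z-2a+\lambda)$ on $\Sigma_\lambda=(a-\lambda,a)$, one shows via the strong maximum principle (the source $F(\U_n)+M\U_n$ inherits $z$-monotonicity from $\U_n$) that $\U_{n+1}<\U_{n+1}^\lambda$ on $\Sigma_\lambda$ for every $\lambda\in(0,2a)$, which is exactly monotonicity. This avoids any discussion of the boundary values $\U_{n+1}'(\pm a)$, and it does not require the pointwise comparison $\U_{n+1}\le\U_n$ to already be in hand at that step---so it decouples the two parts of the induction that you flag as intertwined. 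Your differentiation route can be made to work, but it is less clean precisely because of that coupling.

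For the comparison $\U_{n+1}\le\U_n$, the paper does not carry the statement ``$\U_n$ is a supersolution of \eqref{eqtw}'' through the induction; it subtracts the two linear problems \eqref{eq-rhon} at levels $n$ and $n{-}1$ and uses the hypotheses $\U_{n-1}\ge\U_n$, concavity of $F$, and $\U_n'\le0$ directly. Your alternative---propagating the supersolution property---has a genuine gap. In the identity you write, the second term $\e^2 s\bigl(F'(\U_{n+1})-F'(\U_n)\bigr)\U_{n+1}'$ is \emph{nonpositive} (concavity gives $F'(\U_{n+1})\ge F'(\U_n)$, and $\U_{n+1}'\le0$), so it competes against the first. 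Your fix ``choose $M$ large enough'' would require
\[
M+\inf_{[0,1]}F' \;\ge\; \e^2 s\,\|F''\|_{L^\infty[0,1]}\,\|\U_{n+1}'\|_{L^\infty(-a,a)},
\]
i.e.\ an a priori bound on $\|\U_{n+1}'\|_\infty$ that is uniform in $n$ and independent of $M$. You have not established such a bound, and the natural $C^1$ estimates for the linear boundary-value problem \eqref{eq-rhon} on $[-a,a]$ do depend on $M$ (it appears both in the zeroth-order coefficient and in the right-hand side), so the argument as stated is circular.
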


\begin{proof} We prove this Lemma by induction.
Clearly, $\U_0$ is nonincreasing. 
First, one can find a unique weak solution $\nu_1 \in \mathcal{C}^0 ([-a,a])$ of
\begin{equation}
\begin{cases}
          (\e^2 s^2-1)\U_{1}'' +(\e^2 F'(\U_0)-1)s \U_{1}'+M\U_{1} =  F(\U_0) +M\U_0 \hbox{ in } (-a,a),\\
\U_{1}(-a)=\overline{\U} (-a+\tau),\\
\U_{1}(a)=\overline{\U} (a+\tau),\\ 
\end{cases}
\end{equation}
using the Lax-Milgram theorem and noticing that the underlying operator is coercive since $M>\frac{s^2}{2} \big( \e^2 F'(0)-1\big)$ and $s<\e^{-1}$.

Let $w_0=\U_1-\U_0$. As $\U_0$ is a supersolution of equation (\ref{eqtw}), one has 
\begin{equation*}
 \left\{ \begin{array}{l}
          (\e^2 s^2-1)w_0'' +(\e^2 F'(\U_0)-1)s w_0'+Mw_0 \leq  0 \hbox{ in } (-a,a),\\
w_0(-a)=w_0(a)=0.\\
         \end{array}\right.
\end{equation*}
As $M>0$, the weak maximum principle gives $w_0\leq 0$, that is, $\U_1\leq \U_0$. 

Define the constant function $\underline{\U}=\overline{\U} (a+\tau)$. It satisfies
$$(\e^2 s^2-1)\underline{\U}'' +(\e^2 F'(\U_0)-1)s \underline{\U}'+M\underline{\U} = M\underline{\U}\leq F(\underline{\U})+M\underline{\U}\leq 
F(\U_0) +M\U_0$$
in $(-a,a)$ since $s\mapsto F(s)+Ms$ is increasing and $\U_0 (z)= \overline{\U} (z+\tau) \geq \overline{\U} (a+\tau) = \underline{\U}$ by monotonicity of $\overline{\U}$. 
The same arguments as above lead to $\U_1 \geq \underline{\U}$.

Assume that Lemma \ref{lem-subsuper} is true up to rank $n$. The existence and the uniqueness of $\U_{n+1}$ follow from the same arguments as that of $\U_1$. 
Let $w_n=\U_{n+1}-\U_n$.
As $F$ is concave and $\U_{n-1}\geq \U_n$, we know that $F'(\U_{n-1})\leq F'(\U_n)$. As $\U_n$ is nonincreasing,
we thus get
\begin{equation*}
 \left\{ \begin{array}{l}
          (\e^2 s^2-1)w_n'' +(\e^2 F'(\U_n)-1)s w_n'+Mw_n \leq  0 \hbox{ in } (-a,a),\\
w_n(-a)=w_n(a)=0.\\
         \end{array}\right.
\end{equation*}
Hence, $w_n\leq 0$ and thus $\U_{n+1}\leq \U_n$. Similarly, one easily proves that $\U_{n+1}\geq \underline{\U}$ in $(-a,a)$.

Differentiating (\ref{eq-rhon}) and denoting $v=\U_{n+1}'$, one gets 
$$(\e^2 s^2-1)v'' +\big(\e^2 F'(\U_0)-1\big)s v'+\big(M+\e^2 F''(\U_0) \U_0'\big)v = \big( F'(\U_0) +M\big)\U_0'\leq 0 \hbox{ in } (-a,a)$$
since $s\mapsto F(s)+Ms$ is increasing and $\U_0$ is nonincreasing. As $F$ is concave, the zeroth-order term is positive and thus the elliptic maximum principle
ensures that $v$ reaches its maximum at $z=-a$ or at $z=a$. But as 
$\overline{\U} (a+\tau) \leq \U_{n+1} (z) \leq \overline{\U} (z+\tau)$ for all $z\in (-a,a)$, one has 
$$v(-a) \leq \limsup_{z\to -a^+} \dfrac{\U_{n+1} (z) -\U_{n+1}(-a)}{z+a} \leq \limsup_{z\to -a^+} \dfrac{\overline{\U} (z+\tau) -\overline{\U}(-a+\tau)}{z+a}\leq 0$$
and similarly $v(a)\leq 0$. Thus $v\leq 0$, meaning that $\U_{n+1}$ is nonincreasing.
%
\end{proof}

\begin{proof}[Proof of Proposition \ref{prop-pbma}.] As the sequence $(\U_n)_n$ is decreasing and bounded from below, it admits a limit $\U_{a,\tau}$ as $n\to +\infty$. 
It easily follows from the classical regularity estimates that $\U_{a,\tau}$ satisfies the properties of Proposition \ref{prop-pbma}.

If $\U_1$ and $\U_2$ are two nondecreasing solutions of (\ref{eq-pbma}), then the same arguments as before give that  $\U_1^\mu<\U_1$
in $\Sigma_\mu$ for all $\mu\in (0,2a)$. Hence, $\U_1\leq \U_2$ and a symmetry argument gives $\U_1\equiv \U_2$. 
\end{proof}

\begin{lem}
For all $a>0$, there exists $\tau_a\in\R$ such that $\U_{a,\tau_a}(0)=\frac{1}{2}$.
\end{lem}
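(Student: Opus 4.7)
My plan is to apply the intermediate value theorem to the map $\Phi : \tau \mapsto \U_{a,\tau}(0)$. To do this I need three ingredients: that $\Phi$ takes a value $\geq 1/2$ for some $\tau$, that it takes a value $\leq 1/2$ for some other $\tau$, and that it is continuous on $\R$. I will in fact show the stronger facts that $\Phi(\tau) = 1$ for $\tau$ sufficiently negative and $\Phi(\tau) \to 0$ as $\tau \to +\infty$.

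For the first endpoint, if $\tau \leq -a$ then both $-a+\tau$ and $a+\tau$ are nonpositive, so by the definition of $\overline{\U} = \min\{1,e^{-\lambda_s z}\}$ we have $\overline{\U}(-a+\tau) = \overline{\U}(a+\tau) = 1$. The constant function $1$ is then a nonincreasing solution of \eqref{eq-pbma}, and the uniqueness statement of Proposition \ref{prop-pbma} forces $\U_{a,\tau} \equiv 1$, hence $\Phi(\tau) = 1$. For the second endpoint, the monotone iteration constructed in Lemma \ref{lem-subsuper} shows that $\U_{a,\tau}$ is dominated by the initial iterate $\U_0(z) = \overline{\U}(z+\tau)$. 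Evaluating at $z = 0$ gives $\Phi(\tau) \leq \overline{\U}(\tau) = e^{-\lambda_s \tau}$ for $\tau \geq 0$, which tends to $0$ as $\tau \to +\infty$.

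For continuity, I take $\tau_n \to \tau_\infty$. The uniform bound $0 \leq \U_{a,\tau_n} \leq 1$ together with standard elliptic regularity up to the boundary furnish uniform $C^{2,\alpha}([-a,a])$ estimates, so along a subsequence $\U_{a,\tau_n}$ converges in $C^2([-a,a])$ to some nonincreasing function $\U^\star$ solving the ODE in \eqref{eq-pbma} with boundary data $\overline{\U}(\pm a + \tau_\infty)$ (using that $\overline{\U}$ is continuous). Uniqueness among nonincreasing solutions identifies $\U^\star$ with $\U_{a,\tau_\infty}$, and the whole sequence converges; in particular $\Phi(\tau_n) \to \Phi(\tau_\infty)$.

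The intermediate value theorem then yields $\tau_a \in \R$ with $\Phi(\tau_a) = 1/2$. I do not anticipate a serious obstacle, since all the needed machinery --- the explicit supersolution $\overline{\U}$, the monotone iteration, and uniqueness of nonincreasing solutions --- was already developed in Propositions \ref{prop-supersol} and \ref{prop-pbma}. The only mildly delicate point is the continuity of $\Phi$, and even this reduces via the elliptic estimates to an application of uniqueness.
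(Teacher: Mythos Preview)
Your proposal is correct and follows essentially the same approach as the paper: define the evaluation map $\tau\mapsto \U_{a,\tau}(0)$, establish its continuity via elliptic regularity together with the uniqueness statement of Proposition~\ref{prop-pbma}, and then apply the intermediate value theorem. The only cosmetic difference is in how the endpoints are obtained: the paper uses the sandwich $\overline{\U}(a+\tau)\leq \U_{a,\tau}(0)\leq \overline{\U}(-a+\tau)$ coming from the monotonicity of $\U_{a,\tau}$, whereas you argue directly that $\U_{a,\tau}\equiv 1$ when $\tau\leq -a$ and bound $\Phi(\tau)\leq \overline{\U}(\tau)$ via the monotone iteration; both variants are equally valid.
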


\begin{proof} Define $I(\tau):= \U_{a,\tau}(0)$. It follows from the classical regularity estimates and from the uniqueness of $\U_{a,\tau}$ that 
$I$ is a continuous function. Moreover, as $\U_{a,\tau}$ is nonincreasing, one has
$$\overline{\U}(a+\tau)\leq I(\tau)\leq \overline{\U}(-a+\tau),$$
where $\overline{\U}$ is defined in Proposition \ref{prop-supersol}.
As $\overline{\U}(\cdot +\tau)\to 0$ as $\tau\to +\infty$ and $\overline{\U}(\cdot +\tau)\to 1$ as $\tau\to -\infty$ locally uniformly on $\R$, one has 
$I(-\infty)=1$ and $I(+\infty)=0$. The conclusion follows.  
\end{proof}

\subsubsection{Existence of traveling fronts with speeds $s\in [s^*(\eps),  \e^{-1})$}

We conclude by giving the proof of Theorem \ref{thm-diffusion} as a combination of the above results.

\begin{proof}[Proof of Theorem \ref{thm-diffusion}.] 
Consider a sequence $(a_n)_n$ such that $\lim_{n\to +\infty} a_n=+\infty$ and define $\U_n (z):= \U_{a_n,\tau_{a_n}}$ for all $z\in [-a_n,a_n]$. 
This function is decreasing and satisfies $\U_n (0)=1/2$, $0 \leq \U_n \leq 1$ and 
$$(\e^2 s^2-1)\U_n'' +(\e^2 F'(\U_{n})-1)s \U_n' =  F(\U_{n}) \hbox{ in } (-a_n,a_n).$$
Similar arguments as in the proof of Proposition \ref{prop-nonex} yield that the sequence
$(\U_{a_n,\tau_{a_n}})_n$ converges in $\mathcal{C}^0_{loc}(\R)$
as $n\to +\infty$ to a function $\nu$, up to extraction. Then $\U$ satisfies
$$(\e^2 s^2-1)\U'' +(\e^2 F'(\U)-1)s \U' =  F(\U) \hbox{ in } \R,$$ 
it is nonincreasing, $0\leq \U \leq 1$ and $\U(0)=1/2$.

Define $\ell_\pm := \lim_{z\to \pm\infty} \nu (z)$. Passing to the (weak) limit in the equation satisfied by $\U$, one gets $F(\ell_\pm)=0$. As $0\leq \ell_\pm\leq 1$, 
the hypotheses on $F$ give $\ell_\pm \in \{0,1\}$. On the other hand, as $\U$ is nonincreasing, one has 
$$\ell_+ \leq \U (0)=1/2 \leq \ell_-.$$
We conclude that 
$\ell_-=\U(-\infty)=1$ and $\ell_+=\U(+\infty)=0$. 
\end{proof}

The following classical inequality satisfied by the traveling profile will be required later. 
\begin{lem}\label{lem:varphi<lambda}
The traveling profile $\nu$ satisfies: $\forall z \; \nu'(z) + \lambda \nu(z)\geq 0$, where $\lambda $ is the smallest positive root of \eqref{eq:charac}.
\end{lem}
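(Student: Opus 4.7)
The approach is a phase-plane invariance argument. Set $\Phi(z) := \nu'(z) + \lambda \nu(z)$; the aim is to show $\Phi \geq 0$ on $\R$. One has $\Phi(-\infty) = \lambda > 0$, since $\nu(-\infty)=1$ and monotonicity of $\nu$ forces $\nu'(-\infty) = 0$; on the other hand $\Phi(+\infty) = 0$, as the supersolution estimate $\nu \leq \overline{\nu}$ of Proposition~\ref{prop-supersol} constrains $\nu$ to decay no slower than $\lambda$ at $+\infty$.

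The core step is to evaluate $\Phi'$ on the line $\{\Phi = 0\}$, i.e.\ at a point where $\nu'(z) = -\lambda \nu(z)$. Substituting this identity into \eqref{eqtw} yields an explicit expression for $\nu''(z)$ in terms of $\nu(z)$; plugging it into $\Phi'(z) = \nu''(z) - \lambda^2 \nu(z)$ and using the characteristic relation $(\e^2 s^2-1)\lambda^2 + (1-\e^2 F'(0)) s \lambda = F'(0)$ to cancel the quadratic part in $\lambda$, I obtain
\[ (1-\e^2 s^2)\,\Phi'(z) \;=\; \bigl(\nu(z) F'(0) - F(\nu(z))\bigr) + \nu(z)\,\lambda s \e^2 \bigl(F'(0) - F'(\nu(z))\bigr) \qquad \text{whenever } \Phi(z) = 0. \]
In the parabolic regime $\e s < 1$ the prefactor is positive, while both bracketed terms are nonnegative by concavity of $F$ together with $F(0) = 0$: the tangent line $u \mapsto F'(0) u$ lies above the graph of $F$, and $F'$ is nonincreasing. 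Hence $\Phi'(z) \geq 0$ whenever $\Phi(z) = 0$.

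A first-crossing argument then concludes. If $\Phi(z^\star) < 0$ at some $z^\star \in \R$, continuity together with $\Phi(-\infty) = \lambda > 0$ produces $\bar z := \sup\{z < z^\star : \Phi(z) \geq 0\} \in \R$, with $\Phi(\bar z) = 0$ and $\Phi$ strictly negative immediately to the right of $\bar z$, forcing $\Phi'(\bar z) \leq 0$. Combined with the displayed identity this gives $\Phi'(\bar z) = 0$; under strict concavity of $F$ the right-hand side then vanishes only at $\nu(\bar z)=0$, hence $\nu'(\bar z) = -\lambda \nu(\bar z) = 0$, and Cauchy--Lipschitz uniqueness for \eqref{eqtw} yields $\nu \equiv 0$, contradicting $\nu(-\infty)=1$. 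The main obstacle is precisely this borderline equality case $\Phi'(\bar z) = 0$: it is what rules out a tangential exit of the orbit through the line $\{\Phi = 0\}$, and is the place where strict concavity of $F$ (equivalently, positivity of $u F'(0) - F(u)$ on $(0,1]$) must be invoked in addition to mere concavity.
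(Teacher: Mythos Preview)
Your phase-plane argument is sound and closely parallels the paper's, but the paper takes a shortcut that removes the need for the extra hypothesis you flag at the end. Instead of tracking $\Phi = \nu' + \lambda\nu$, the paper works with the logarithmic derivative $\varphi(z) := -\nu'(z)/\nu(z)$, so that $\Phi = \nu(\lambda - \varphi)$ and the goal becomes $\varphi \leq \lambda$. Dividing \eqref{eqtw} by $\nu$ and using only $F(\nu)/\nu \leq F'(0)$ and $F'(\nu) \leq F'(0)$ (both consequences of concavity and $F(0)=0$), one obtains the autonomous scalar inequality
\[
(1-\e^2 s^2)\,\varphi'(z) \;\leq\; (1-\e^2 s^2)\,\varphi(z)^2 - (1-\e^2 F'(0))\,s\,\varphi(z) + F'(0),
\]
whose right-hand side is minus the characteristic polynomial \eqref{eq:charac}. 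Since $\lambda$ is a root, the constant $\lambda$ solves the associated ODE; standard ODE comparison (the right-hand side is Lipschitz) together with $\varphi(-\infty)=0<\lambda$ then yields $\varphi\leq\lambda$ everywhere, with no tangential-exit case to analyse.

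Your first-crossing computation is the same invariance principle written additively, and your displayed identity for $(1-\e^2 s^2)\Phi'$ on $\{\Phi=0\}$ is correct. The only cost of your formulation is that closing the argument at $\Phi'(\bar z)=0$ forces you to invoke $uF'(0)>F(u)$ on $(0,1]$, which is not among the paper's standing hypotheses (mere concavity is assumed). Passing to $\varphi$ and appealing to comparison with the constant barrier $\lambda$ absorbs that borderline case automatically; alternatively, you could derive the full first-order linear inequality for $\Phi$ (not just its value on $\{\Phi=0\}$) and use an integrating factor, but the logarithmic-derivative route is shorter.
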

\begin{proof}
We introduce $\varphi(z) = -\frac{\nu'(z)}{\nu(z)}$. It is nonnegative, and it satisfies the following first-order ODE with a source term
\[ \left( \e^2s^2 - 1 \right) \left( - \varphi'(z) + \varphi(z)^2  \right) + \left(1 - \e^2 F'(\nu(z)) \right)s \varphi(z) = \dfrac{F(\nu(z))}{\nu(z)} \, . \]
Since $F$ is concave, $\varphi$ satisfies the differential inequality
\[ \left(1 - \e^2s^2 \right) \varphi'(z) \leq  \left(1 - \e^2s^2 \right) \varphi(z)^2 - \left(1 - \e^2 F'(0) \right)s \varphi(z) + F'(0)\, .  \]
The right-hand-side is the characteristic polynomial of the linearized equation \eqref{eq:charac}. Moreover the function $\varphi$ verifies $\lim_{z\to -\infty} \varphi(z) = 0$. Hence a simple ODE argument shows that $\forall z \; \varphi(z) \leq \lambda$.
\end{proof}




\subsection{Proof of Theorem \ref{thm-diffusion}.(c): Existence of weak traveling fronts of speed $s=\e^{-1}$ in the parabolic regime}
The aim of this Section is to prove that in the parabolic regime $\e^2 F'(0)<1$, there still exists traveling fronts in the limit case $s=\e^{-1}$ 
but in the weak sense. 

\begin{prop} \label{prop-parab-critique}
 Assume that $\e^2 F'(0)<1$. Then there exists a weak traveling front of speed $s=\e^{-1}$. 
\end{prop}

\begin{proof}
Let $s_n = \e^{-1}-1/n$ for all $n$ large enough so that $s_n \geq s^*(\e)$. We know from the previous Section that we can associate with the speed $s_n$ 
a smooth traveling front $\U_n$ and that we can assume, up to translation, that $\U_n (0)=1/2$. Multiplying equation (\ref{eqtw}) by $\U_n'$ and integrating 
by parts over $\R$, one gets 
\begin{align*} s_n\big(1-\e^2 F'(0)\big)\int_\R  \U_n'(z)^2 dz &\leq  s_n\int_\R \Big(1-\e^2 F'\big(\U_n (z)\big)\Big) \U_n'(z)^2 dz \\
  & = -\int_\R F\big(\U_n (z) \big) \U_n '(z) dz \\
&=-\int_0^1 F(u)du.\\
  \end{align*}
Hence, as $\e^2 F'(0)<1$, the sequence $(\U_n')_n$ is bounded in $L^2 (\R)$ and one can assume, up to extraction, that it admits a weak limit $V_\infty$ in $L^2 (\R)$.
It follows that the sequence $(\U_n)_n$ converges locally uniformly to $\nu_\infty (z):= \int_0^z V_\infty (z')dz' +1/2$. 
Passing to the limit in (\ref{eqtw}), we get that this function is a weak solution of 
$$ - \left(1 - \e^2 F'(\U_\infty(z)) \right) s \U_\infty'(z) =  F(\U_\infty(z))\, , \quad z\in  \R\, ,$$
which ends the proof. 
\end{proof}


\subsection{Proof of Theorem \ref{thm-hyperbolic}.(b): Existence of weak traveling fronts of speed $s = \e^{-1}$ in the hyperbolic regime}

\label{ssec:weakTW}

In this Section we investigate the existence of traveling fronts with critical speed $s=\e^{-1}$ in the hyperbolic regime $\e^2 F'(0)=1$.

\begin{proof}[Proof of Theorem \ref{thm-hyperbolic}.] 

The function $G(\rho) := \e^2 F(\rho) - \rho$ is concave, and vanishes when $\rho = 0$. Furthermore, $G(1) < 0$ and $G'( 0 ) = \e^2 F'(0) - 1 \geq 0$. We now distinguish between the two cases $\e^2 F'(0)>1$ and $\e^2 F'(0)=1$. 

\begin{enumerate}
\item \textbf{First case: $\e^2 F'(0)>1$.} As $G'$ is decreasing, there exists a unique $\theta_\e \in (0,1)$ such that $G$ vanishes.
\item \textbf{Second case: $\e^2 F'(0)=1$.} The only root of $G$ is $\rho = 0$. In this case we set $\theta_{\e} = 0$.
\end{enumerate}

\noindent For both cases, we have $G'(\rho)<0$ for all $\rho>\theta_\eps$ since $G$ is strictly concave and $G(0) = G(\theta_\eps) =  0$.
%
Hence, $\e^2 F'(\rho)<1$ for all $\rho>\theta_\eps$. Set $\nu$ the maximal solution of 
\begin{equation} \label{eq-ODE}\ \begin{cases}
                                        \nu'(z)=\displaystyle \frac{\e F(\nu(z))}{\e^2 F'(\nu(z))-1},\medskip \\
\nu(0)=\dfrac{1+\theta_\eps}{2}> \theta_\eps.
                                       \end{cases}
\end{equation}
Let $I$ be the (maximal) interval of definition of $\nu$, with $0\in I$, 
and $$z_0=\sup \{z\in I, \nu(z)>\theta_\eps\}.$$

\subsubsection*{1- Conclusion of the argument in the first case: $\e^2 F'(0)>1$.}

Since $\theta_\e >0$, we have necessarily $z_0 < + \infty$. From \eqref{eq-ODE}, $\nu$ is decreasing on $(-\infty,z_0)$. Thus, we have $\nu(z)\to \theta_\eps$ as $z\to z_0^-$. Moreover, one easily gets $\nu(-\infty)=1$. 

We set $\nu(z_0) = \theta_\eps$ and we extend $\nu$ by $0$ over $(z_0,\infty)$. We observe that $\nu$ is a weak solution, in the sense of distributions, of 
$$\left(\e^2 F(\nu) -\nu\right)'=\e F(\nu) \hbox{ on } \R$$
since $\e^2 F(0)=0$ and $\e^2 F(\theta_\eps)=\theta_\eps$. 

Up to space shifting $z - z_0$, we may assume that the discontinuity arises at $z = 0$.

\paragraph{Example: the case $F(\rho) = \rho(1 - \rho)$ and $\eps>1$.} The traveling profile solves
\[ \U'(z) = \dfrac{\eps \U(z) (1 - \U(z))}{\eps^2 - 1 - 2\eps^2\U(z)}\, , \]
or equivalently
\[ \U(z)^{\eps^2 - 1}\left( 1 - \U(z)\right)^{\eps^2 + 1} = k e^{\eps z}\, . \]
The constant $k$ is determined by the condition $\U(0) = \theta_\eps = 1 - \eps^{-2}$. Finally the traveling profile $\nu(z)$ satisfies the following implicit relation:
\begin{equation} 
\U(z)^{\eps^2 - 1}\left( 1 - \U(z)\right)^{1 + \eps^2} = \left(1 - \eps^{-2} \right)^{\eps^2 - 1}\left(\eps^{-2} \right)^{\eps^2 + 1} e^{\eps z} = \left( \eps^2 - 1 \right)^{\eps^2 - 1}  e^{\eps z + 2\eps^2 \log \eps^2}\, . \label{eq:nu cas eps>1}
\end{equation}

\subsubsection*{2- Conclusion of the argument in the second case: $\e^2 F'(0)=1$.}

The difference here is that $\theta_{\e} = 0$. To conclude the proof as previously, we just need to check that $z_0$ is finite. We argue by contradiction. Assume $z_0 = +\infty$. Linearizing the r.h.s. of \eqref{eq-ODE} near $\nu = 0$, we get
\begin{equation}
\nu'(z) = \frac{F'(0)}{\e F''(0)} + o(\nu(z))\,,\quad \text{as $z\to +\infty$}
\end{equation}
We get a contradiction because $\e^{-1} F'(0)/F''(0) <0$.

Finally, we create a continuous front with the same extension idea as for the first case.

\paragraph{Example:  the case $F(\rho) = \rho(1 - \rho)$ and $\eps=1$.} The traveling profile reads \eqref{eq:nu cas eps>1}:
\begin{equation*} 
\nu(z) = \left(1 - e^{z/2} \right)_+\, .
\end{equation*}
\end{proof}

\subsection{Proof of Theorem \ref{thm-diffusion}.(d) and Theorem \ref{thm-hyperbolic}.(c): Existence of supersonic traveling fronts $s>\e^{-1}$}
\label{sec:supersonic}

In this Section we investigate the existence of supersonic traveling fronts with speeds above the maximal speed of propagation $s> \e ^{-1}$. These fronts are essentially driven by growth. The existence of such "unrealistic" fronts is motivated by the extreme case $\e \to +\infty$ for which we have formally $\partial_t \rho = F(\rho)$ \eqref{eq:telegraph intro}. There exist traveling fronts of arbitrary speed which are solutions to $-s \U' = F(\U)$.

\begin{prop} Given any speed $s > \e ^{-1}$ there exists a smooth traveling front $\nu(x - st)$  with this speed.
\end{prop}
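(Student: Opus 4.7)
The plan is to extend the sub- and super-solution construction of Section \ref{ssec:sub-sup} to the supersonic regime $s > \epsilon^{-1}$. In this range the sign of the leading coefficient $\epsilon^2 s^2 - 1$ is reversed, so \eqref{eqtw} becomes an elliptic ODE with a standard-sign second-derivative term. By the phase-plane analysis of Section \ref{ssec:phase} (summarized in Table \ref{tab:phase}), the origin $(0, 0)$ is a saddle in the $(\nu, \nu')$ plane, with a one-dimensional stable manifold, while $(1, 0)$ is an unstable node; the desired profile corresponds to a heteroclinic orbit from $(1, 0)$ to $(0, 0)$.

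First, I would verify that $\overline{\nu}(z) = \min\{1, e^{-\lambda z}\}$ is a supersolution of \eqref{eqtw}, where $\lambda > 0$ is the unique positive root of the characteristic polynomial \eqref{eq:charac}. The calculation carried out in Proposition \ref{prop-supersol} relies only on the concavity of $F$ and the defining relation for $\lambda$; no use is made of the sign of $\epsilon^2 s^2 - 1$, so the argument transfers verbatim. The constant function $1$ remains trivially a supersolution, so the minimum of the two is again a supersolution.

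Next, I would solve the boundary-value problem analogous to \eqref{eq-pbma} on a bounded interval $(-a, a)$ with boundary data $\overline{\nu}(\mp a + \tau)$, by a monotone iteration modeled on Lemma \ref{lem-subsuper}. Because $\epsilon^2 s^2 - 1 > 0$ now, the maximum principle for the linearized operator appearing at each iterate must be applied with the opposite sign convention for the auxiliary constant $M$, and $M$ must still be taken large enough that $\rho \mapsto F(\rho) + M\rho$ has the correct monotonicity. Once these sign issues are resolved, the iteration produces a bounded monotone sequence of nonincreasing approximations $\nu_{a, \tau}$. Choosing $\tau_a$ so that $\nu_{a, \tau_a}(0) = 1/2$ (by continuity and the intermediate value theorem) and passing to the limit $a \to \infty$ via local $\mathcal{C}^2$ estimates would then yield a smooth traveling front with the required behavior at $\pm\infty$.

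The main difficulty is the sign bookkeeping: one has to check carefully that, with the modified $M$, the induction producing a monotone family of iterates and nonincreasing approximations still goes through, and that the resulting profile actually attains the boundary values $1$ at $-\infty$ and $0$ at $+\infty$ (as opposed to stabilizing at an intermediate value). A conceptually cleaner alternative, well suited to the geometry depicted in Table \ref{tab:phase}, is a direct phase-plane shooting argument. Writing \eqref{eqtw} as a planar first-order system, the one-dimensional stable manifold of $(0, 0)$ enters that saddle tangent to the eigenvector associated with $-\lambda$ and, continued backward in $z$, remains trapped in the strip $\{0 < \nu < 1\}$ because the supersolution bound forces $\nu \le e^{-\lambda z}$ and rules out escape across $\{\nu = 0\}$; absence of other equilibria in this strip then compels the orbit to accumulate at $(1, 0)$ as $z \to -\infty$, yielding the heteroclinic connection.
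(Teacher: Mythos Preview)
Your first approach---transplanting the monotone iteration of Section~\ref{ssec:sub-sup}---has a genuine obstruction that you flag but do not resolve. In the supersonic range the second-order coefficient $\epsilon^2 s^2 - 1$ is positive, so the linearized operator at each step has the form $A w'' + B w' + M w$ with $A>0$. For the weak maximum principle to yield $w\le 0$ from $Aw''+Bw'+Mw\le 0$ and zero boundary data you need $M\le 0$. But the other requirement in Lemma~\ref{lem-subsuper}, that $\rho\mapsto F(\rho)+M\rho$ be nondecreasing on $[0,1]$, forces $M\ge -F'(1)>0$. These two constraints are incompatible, so the iteration as written cannot be salvaged by a mere ``sign bookkeeping'' adjustment.

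Your phase-plane alternative is indeed the route the paper takes, but the trapping step you give is incomplete. The strip $\{0<\nu<1\}$ is unbounded in the $(\nu,\nu')$ plane, so ``absence of other equilibria'' does not by itself force convergence to $(1,0)$: the backward orbit could overshoot $\nu=1$ with nonzero derivative, or have $\nu'$ diverge. The supersolution $e^{-\lambda z}$ only controls the tangent direction at the saddle, not the global excursion. The paper closes this gap by first reversing orientation, $V(z)=\nu(-z)$, so that $(0,0)$ becomes the saddle from which the orbit \emph{departs}, and then exhibiting a second barrier: setting $P=V'$, one checks that $P(v)\le kF(v)$ with $k=\epsilon^2 s/(\epsilon^2 s^2-1)$ along the unstable manifold. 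Since $kF(0)=kF(1)=0$, this confines the trajectory to the compact region $\{0\le v\le 1,\ 0\le p\le kF(v)\}$, after which Poincar\'e--Bendixson legitimately yields the heteroclinic connection to $(1,0)$. It is precisely this $kF$ barrier---absent from your sketch---that does the real work.
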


\begin{proof} We sketch the proof. We give below the key arguments derived from phase plane analysis. The same procedure as developped in Section \ref{ssec:sub-sup} based on sub- and supersolutions could be reproduced based on the following ingredients.

\begin{figure}
\begin{center} 
\includegraphics[width = 0.6\linewidth]{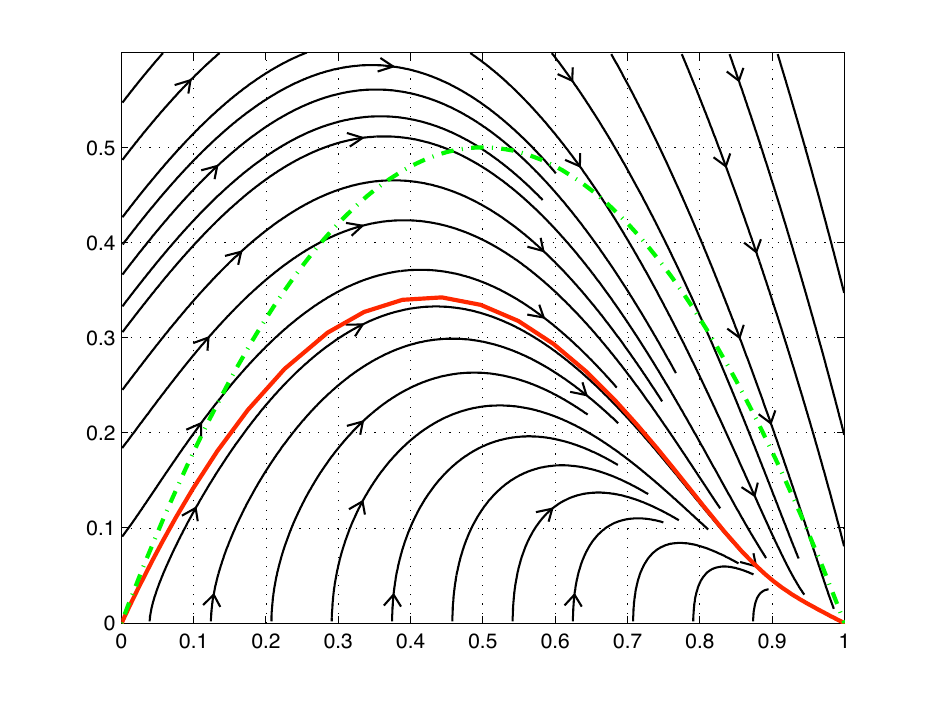}
\end{center}
\caption{Supersonic traveling front in the phase plane $(V,V')$ for the nonlinearity $F(\rho) = \rho(1-\rho)$, and parameters $\e  = \sqrt{2}$, $s = 1>\e ^{-1}$. Be aware of the time reversal $\U(z) = V(-z)$, which is the reason why $V'\geq0$. The red line represents the traveling profile, and the green line represents the supersolution $k F(v)$.}
\label{fig:proof supersonic}
\end{figure}

We learn from simple phase plane considerations associated to \eqref{eqtw} that the situation is reversed in comparison to the classical Fisher-KPP case (or  $\e^2 F'(0)<1$ and $s\in [ s^*(\e),\e^{-1})$). Namely the point $(0,0)$ is a saddle point (instead of a stable node) whereas $(1,0)$ is an unstable node (instead of saddle point). This motivates "time reversal": $V(z) = \nu(-z)$. Equation \eqref{eqtw} becomes 
\begin{equation*} \label{eqtwrev}
 (\e^2 s^2 - 1)V''(z) - \left(\e^2 F'\left(V(z)\right)-1\right)s V'(z) =  F\left(V(z)\right) \, , \quad z\in  \R\, .
\end{equation*}
We make the classical phase-plane transformation $V' = P$ \cite{KPP,Fife.McLeod}. 
We end up with the implicit ODE with Dirichlet boundary conditions for $P$:
\begin{equation*} 
(\e^2 s^2 - 1) P'(v) - \left(\e^2 F'\left( v \right)-1\right)s = \dfrac{F(v)}{P(v)}\, , \quad P(0) = P(1) = 0\, .
\end{equation*}
The unstable direction is given by $P(v) = \lambda v$ where $\lambda$ is the positive root of 
\begin{equation} 
(\e^2 s^2 - 1) \lambda - \left(\e^2 F'\left( 0 \right)-1\right)s = \dfrac{F'(0)}{\lambda}\, . 
\label{eq:caracrev}
\end{equation}
Since $F$ is concave we deduce that $P(v) = \lambda v$ is a supersolution as in Proposition \ref{prop-supersol}. In fact, denoting $Q(v) = P(v) - \lambda v$ we have
\begin{align*}
(\e^2 s^2 - 1) Q'(v) =  (\e^2 s^2 - 1)(P'(v) - \lambda) & \leq  s \e^2 \left( F'(v) - F'(0) \right) + \frac{F(v)}{P(v)} - \frac{F'(0)}{\lambda}, \\
& \leq  F'(0) v \left( \frac{1}{P(v)} - \frac{1}{\lambda v} \right) \\
& \leq  - \frac{F'(0)}{\lambda P(v)} Q(v)\, . 
\end{align*}
Hence the trajectory leaving the saddle point $(0,0)$ in the phase plane $(V,V')$ remains below the line $V'\leq \lambda V$. 

On the other hand it is straightforward to check that $k F(v)$ is a supersolution where $k = \e^2 s /(\e^2 s^2 - 1)$. We denote $R(v) = P(v) - k F( v)$. We have $ks > 1$ and 
\begin{align*}
(\e^2 s^2 - 1) R'(v) = (\e^2 s^2 - 1)(P'(v) - k F'(v)) & = 
\e^2 s F'(v) - s + \dfrac {F(v)}{P(v)} - (\e^2 s^2 - 1) k F'(v) \\ 
& = - s + \dfrac 1 k - \dfrac{R(v)}{ kP(v)} < - \dfrac{R(v)}{ kP(v)}\, .  
\end{align*}
We also show that initially (as $v\to 0$) we have $kF'(0) > \lambda$. This proves that $R(v)\leq 0$ for all $v\in (0,1)$. Indeed, we plug $kF'(0)$ in place of $\lambda$ into \eqref{eq:caracrev} and we get
\[ (\e^2 s^2 - 1) kF'(0) - \left(\e^2 F'\left( 0 \right)-1\right)s = s > \dfrac1k =  \dfrac{F'(0)}{kF'(0)}\, . \]

As a conclusion the trajectory leaving the saddle node at $(0,0)$ is trapped in the set $\{ 0\leq v\leq 1\, , \, 0\leq  p \leq kF(v)  \}$ (see Fig. \ref{fig:proof supersonic}). By the Poincar\'e-Bendixon Theorem it necessarily converges to the stable node at $(1,0)$. This heteroclinic trajectory is the traveling front in the supersonic case.
\end{proof}

\section{Linear stability of traveling front solutions}

\label{sec:linstab}

In this Section we investigate the linear stability of the traveling front having minimal speed $s = s^*(\eps)$ in both the parabolic and the hyperbolic regime. We seek stability in some weighted $L^2$ space. The important matter here is to identify the weight $e^\phi$. The same weight shall be used crucially for the nonlinear stability analysis (Section \ref{sec:NLstab}). 

We recall that the minimal speed is given by
\[ s^*(\eps) = \begin{cases} \dfrac{2\sqrt{F'(0)}}{1 + \e^2 F'(0)} & \text{if $\e^2 F'(0)<1$} \\
\e^{-1} & \text{if $\e^2 F'(0)\geq 1$}\end{cases} 
\]
The profile of the wave has the following properties in the case $\e^2 F'(0)<1$:
\[ \forall z \quad  \nu(z) \geq 0\, , \quad \partial_z \nu(z) \leq 0\, , \quad \partial_z \nu(z) + \lambda \nu(z)\geq 0\, ,  \]
where the decay exponent $\lambda$ is 
\[ \lambda = \dfrac{s(1 - \e^2 F'(0))}{2(1 - \e^2 s^2 )} = \dfrac{1 + \e^2 F'(0)}{1 - \e^2 F'(0)}\, . \]

We will use in this Section the formulation \eqref{eq:telegraph intro} of our system.
%
The linearized system around the stationary profile $\U$ in the moving frame $z = x - st$ reads 
\begin{equation}
\label{eq:telegraph}
\left\{
\begin{array}{l}
(\partial_t - s\partial_z) u + \partial_{z} \left(\dfrac v\e\right) = F'(\nu) u \\
\e (\partial_t - s\partial_z) v + \partial_{z} u = -\dfrac v\e\, . 
\end{array}
\right.
\end{equation}

\begin{prop}
Let $\e>0$. In the hyperbolic regime $\eps^2 F'(0)\geq 1$ assume in addition that the initial perturbation has the same support as the wave. There exists a function $\phi_\e(z)$ such that the minimal speed traveling front is linearly stable in the weighted $L^2(e^{2\phi_\eps(z)}dz)$ space. More precisely the following Lyapunov identity holds true for solutions of the linear system \eqref{eq:telegraph},
\begin{equation*}
\dfrac d{dt} \left( \dfrac12 \int_\R \left( |u|^2 + |v|^2\right) e^{2\phi_\eps(z)}\, dz \right) \leq 0\, . 
\end{equation*}
\end{prop}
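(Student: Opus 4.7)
The plan is to set up a weighted $L^2$ energy identity for the linearized system and to read the weight $\phi_\eps$ directly off the characteristic polynomial of Section~\ref{ssec:phase}. First I would observe that after dividing the second equation of \eqref{eq:telegraph} by $\eps$, the linearized system takes the symmetric form
\begin{equation*}
(\partial_t - s\partial_z) u + \eps^{-1}\partial_z v = F'(\nu)\, u, \qquad (\partial_t - s\partial_z) v + \eps^{-1}\partial_z u = -\eps^{-2}\, v.
\end{equation*}
Multiplying by $u\,e^{2\phi_\eps}$ and $v\,e^{2\phi_\eps}$ respectively, summing, and integrating, the off-diagonal transport contributions collapse into a single $\partial_z(uv)$; one integration by parts then produces
\begin{equation*}
\frac12 \frac{d}{dt} \int_\R (u^2 + v^2)\,e^{2\phi_\eps}\,dz = \int_\R \Big[ (F'(\nu)-s\phi_\eps')\,u^2 + 2\eps^{-1}\phi_\eps'\,uv - (s\phi_\eps'+\eps^{-2})\,v^2 \Big]\,e^{2\phi_\eps}\,dz.
\end{equation*}

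The next step is to ensure that the quadratic form $Q(u,v)$ inside the integrand is pointwise nonpositive. Requiring the two diagonal coefficients to be $\leq 0$ together with a nonnegative $2\times 2$ determinant condition reduces, after multiplication by $\eps^2$, to the single scalar inequality
\begin{equation*}
(\eps^2 s^2 - 1)\,(\phi_\eps')^2 + (1-\eps^2 F'(\nu))\,s\,\phi_\eps' - F'(\nu) \;\geq\; 0,
\end{equation*}
which is exactly the characteristic polynomial \eqref{eq:charac} with $F'(0)$ replaced by $F'(\nu(z))$ and the unknown $\lambda$ replaced by $\phi_\eps'(z)$. So the whole analysis is driven by the roots already studied in Section~\ref{ssec:phase}.

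In the parabolic regime $\eps^2 F'(0) < 1$ with $s = s^*(\eps)$, the characteristic polynomial at $\nu = 0$ has a double positive root $\lambda$. Since its leading coefficient $\eps^2 s^2 - 1$ is negative, it is nonpositive everywhere and touches zero only at $\phi_\eps' = \lambda$; this is why $s^*(\eps)$ is the minimal speed. The natural choice is therefore the constant weight $\phi_\eps'(z) \equiv \lambda$. A direct expansion gives the identity
\begin{equation*}
P_\nu(\lambda) - P_0(\lambda) = (F'(0)-F'(\nu))\,(1+\eps^2 s\,\lambda),
\end{equation*}
and combined with the concavity of $F$ and the bound $\nu \geq 0$ this yields $P_\nu(\lambda) \geq 0$ uniformly in $z$, hence $Q \leq 0$ pointwise and the Lyapunov inequality.

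The hyperbolic regime $\eps^2 F'(0) \geq 1$, $s = \eps^{-1}$, is the delicate case. The leading coefficient vanishes and only the linear condition $\eps^{-1}(1-\eps^2 F'(\nu))\,\phi_\eps' \geq F'(\nu)$ remains. Outside the support of the discontinuous profile one has $\nu \equiv 0$ and the slope $1-\eps^2 F'(0)$ is nonpositive, so no positive $\phi_\eps'$ can satisfy the inequality: this is exactly why the statement assumes that the initial perturbation has the same support as the profile. On the support of $\nu$, concavity of $F$ together with the defining relation $\eps^2 F(\theta_\eps) = \theta_\eps$ forces $\eps^2 F'(\nu) \leq 1$, and the explicit prescription $\phi_\eps'(z) = \eps F'(\nu(z))/(1-\eps^2 F'(\nu(z)))$ is then admissible. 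The main obstacle I anticipate is justifying the integration by parts across the jump of the profile at $z = z_0$: the derivative $\phi_\eps'$ inherits a discontinuity there, and one must verify that no boundary contribution survives -- which is guaranteed precisely because $u$ and $v$ vanish outside $(-\infty, z_0]$ by hypothesis.
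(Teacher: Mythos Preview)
Your argument is correct and follows the same energy-identity skeleton as the paper, but in the parabolic regime you make a genuinely different choice of weight. You take the \emph{constant} slope $\phi_\eps'\equiv\lambda$ and close the sign condition via the clean identity $P_\nu(\lambda)=(F'(0)-F'(\nu))(1+\eps^2 s\lambda)\geq 0$. The paper instead chooses the $z$-dependent optimizer
\[
\partial_z\phi_\eps(z)=\dfrac{s\big(1-\eps^2 F'(\nu(z))\big)}{2(1-\eps^2 s^2)},
\]
i.e.\ the vertex of the quadratic in $\phi'$, which not only makes the discriminant nonpositive but makes it as negative as possible. Your choice is simpler and suffices for the bare Lyapunov inequality stated in the Proposition; the paper's choice buys a quantitative dissipation rate $A(z)$, which is the form reused in Section~\ref{sec:NLstab} for the nonlinear stability analysis. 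In the hyperbolic regime your prescription $\phi_\eps'=\eps F'(\nu)/(1-\eps^2 F'(\nu))$ coincides with the paper's. Two small points: (i) on $\supp\nu$ one actually has the strict inequality $\eps^2 F'(\nu)<1$ (as shown in Section~\ref{ssec:weakTW}), which is what makes your weight well-defined; (ii) you should note that the support condition on the perturbation is propagated by the linear system (the characteristics in the moving frame with $s=\eps^{-1}$ have speeds $0$ and $-2\eps^{-1}$), so that the vanishing of $u,v$ at $z_0$ used to kill the boundary term holds for all $t>0$, not just initially.
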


\begin{proof} We denote $\phi = \phi_\eps$ for the sake of clarity. 
We multiply the first equation by $ue^{2\phi} $, and the second equation by $ve^{2\phi}$, where $\phi$ is  to be determined. We get
\begin{align*} 
&\dfrac d{dt} \left( \dfrac12 \int_\R |u|^2 e^{2\phi(z)}\, dz\right) + \dfrac s2 \int_\R |u|^2 \partial_z e^{2\phi(z)} \, dz + \int_\R \partial_z \left(\dfrac v\e\right) u e^{2\phi(z)}\, dz = \int_\R F'(\nu) |u|^2 e^{2\phi(z)}\, dz\, ,\\
&\dfrac d{dt} \left( \dfrac12 \int_\R |v|^2 e^{2\phi(z)}\, dz\right) + \dfrac s2 \int_\R |v|^2 \partial_z e^{2\phi(z)} \, dz + \int_\R \partial_z \left(\dfrac u\e\right) v e^{2\phi(z)}\, dz = - \dfrac1{\e^2} \int_\R  |v|^2 e^{2\phi(z)}\, dz\, .  
\end{align*}
Summing the two estimates we obtain
\begin{multline*}
\dfrac d{dt} \left( \dfrac12 \int_\R \left( |u|^2 + |v|^2\right) e^{2\phi(z)}\, dz \right) \\ +   \int_\R \left( s \partial_z\phi(z)  - F'(\nu) \right) |u|^2 e^{2\phi(z)} \, dz + \int_\R \left(   s \partial_z\phi(z) + \dfrac1{\e^2} \right)|v|^2 e^{2\phi(z)}\, dz - \dfrac2\e \int_\R \left(  \partial_z \phi(z)\right) u v  e^{2\phi(z)}\, dz = 0\, .
\end{multline*}

We seek an energy dissipation estimate, see \eqref{eq:Lyap1} below. Therefore we require that the last quadratic form acting on $(u,v)$ is nonnegative. This is guaranteed if $  \partial_z \phi \geq 0 $ and the following discriminant is nonpositive:
\begin{align} 
\Delta(z) &= \dfrac4{\e^2}\left(\partial_z \phi(z)\right)^2 - 4 \left(   s \partial_z\phi(z)  - F'(\nu) \right)  \left(   s \partial_z\phi(z) + \dfrac1{\e^2} \right) \nonumber \\
& = \dfrac4{\e^2}\left( \left(1 - \e^2 s^2\right) \left(\partial_z \phi(z)\right)^2  
-   s \left( 1 - \e^2 F'(\nu)\right) \partial_z \phi(z)  +   F'(\nu) \right)  \, . \label{eq:discr}
\end{align}
The rest of the proof is devoted to finding such a weight $\phi(z)$ satisfying this sign condition. We distinguish between the parabolic and the hyperbolic regime.

\subsubsection*{1- The parabolic regime.}
In the case $\e^2 F'(0) < 1 $ we have $\e^2s^2 <  1$. Hence the optimal choice for $\partial_z \phi$ is: 
\begin{equation*} \partial_z \phi(z)  = \dfrac{s\left( 1 - \e^2 F'(\nu) \right)}{2\left(1 - \e^2 s^2\right)} =  \lambda \frac{1 - \e^2 F'(\nu) } { 1 - \e^2 F'(0) } \geq 0\, .  
\end{equation*} 
Notice that $\partial_z\phi \to  \lambda$ as $z\to +\infty$.
We check that the discriminant is indeed nonpositive:
\begin{align*} 
\e ^2 \Delta(z) & =  - 4 \left(1 - \e^2 s^2\right) \left(\partial_z \phi(z)\right)^2 + 4 F'(\nu)   \\
& =  \dfrac{ 1}{ (1 - \e^2 s^2)} \left( - s^2 \left(1 + \e^2F'(\nu)\right)^2  + 4 F'(\nu) \right) \\
& = \dfrac{1}{\left(1 - \e^2 F'(0)\right)^2} 
\left( - 4F'(0)  \left(1 + \e^2F'(\nu)\right)^2 + 4 F'(\nu) \left(1 + \e^2 F'(0)\right)^2 \right) \\
&= \dfrac{- 4}{\left(1 - \e^2 F'(0)\right)^2} \left( F'(0) - F'(\nu)\right) \left( 1 - \e^4 F'(0) F'(\nu)\right)\, .
\end{align*}
We have $\Delta(z) \leq 0$ since $\forall z\; F'(\nu(z)) \leq F'(0) $ and $\e^2 F'(0)<1$. 
Since the quadratic form is nonnegative, we may control it by a sum of squares. This is the purpose of the next computation. We have 
\begin{multline*}
\left|\dfrac2\e \int_\R \left(\partial_z \phi(z)\right) u v  e^{2\phi(z)}\, dz\right| \\ \leq \int_\R \left(  s \partial_z\phi(z)  - F'(\nu) - A(z)\right) |u|^2 e^{2\phi(z)} \, dz + \int_\R \left(  s \partial_z\phi(z) + \dfrac1{\e^2} - A(z)\right)|v|^2 e^{2\phi(z)}\, dz\, ,
\end{multline*}
where $A(z)$ is solution of 
\[ 4 \left(   s \partial_z\phi(z)  - F'(\nu) - A(z)\right) \left(   s \partial_z\phi(z) + \dfrac1{\e^2} - A(z)\right)= \dfrac4{\e^2}\left(\partial_z \phi(z)\right)^2\, .  \]
A straightforward computation gives
\begin{align*} 
2 A(z) &= \left( 2s\partial_z\phi(z)  - F'(\nu) + \dfrac 1{\e^2}\right) - \left(\left(  F'(\nu) + \dfrac 1{\e^2} \right)^2 + \dfrac4{\e^2}\left(\partial_z \phi(z)\right)^2\right)^{1/2} \\
& =  \dfrac{1 - \e^2 F'(\nu)}{\e^2\left(1 - \e^2 s^2\right)}- \dfrac1{\e^2\left(1 - \e^2 s^2\right)} \left( \left(1 - \e^2 s^2\right)^2 \left(1 + \e^2 F'(\nu)\right)^2 + \e^2 s^2 \left(1 - \e^2 F'(\nu)\right)^2  \right)^{1/2} \\
& = \dfrac{ 1 - \e^2 F'(\nu)}{\e^2\left(1 - \e^2 s^2\right)}\left( 1 - \left( \left( \dfrac{1 - \e^2 F'(0)}{1 + \e^2 F'(0)}\right)^4 \left( \dfrac{1 + \e^2 F'(\nu)}{1 - \e^2 F'(\nu)}\right)^2 + \dfrac{4\e^2 F'(0)}{\left(1 + \e^2 F'(0)\right)^2} \right)^{1/2} \right)  \\
& = \dfrac{ 1 - \e^2 F'(\nu)}{\e^2\left(1 - \e^2 s^2\right)}\left( 1 - \left( 1 + \left( \dfrac{1 - \e^2 F'(0)}{1 + \e^2 F'(0)}\right)^4 \left( \dfrac{1 + \e^2 F'(\nu)}{1 - \e^2 F'(\nu)}\right)^2 - \dfrac{\left(1 - \e^2 F'(0)\right)^2}{\left(1 + \e^2 F'(0)\right)^2} \right)^{1/2} \right)  \,.
\end{align*}
We clearly have $A(z)\geq 0$ since
\[ \forall z \quad \dfrac{1 + \e^2 F'(\nu)}{1 - \e^2 F'(\nu)} \leq \dfrac{1 + \e^2 F'(0)}{1 - \e^2 F'(0)}\, . \]
Finally we obtain in the case $\e^2 F'(0) < 1$,
\begin{equation} \label{eq:Lyap1} \dfrac d{dt} \left( \dfrac12 \int_\R \left( |u|^2 + |v|^2\right) e^{2\phi(z)}\, dz\right) + \int_\R A(z) \left( |u|^2 + |v|^2 \right) e^{2\phi(z)}\, dz \leq 0\, .  \end{equation}

\subsubsection*{2- The hyperbolic regime.}

We assume for simplicity that the support of the traveling profile is $\supp\U = (-\infty,0]$.\\
In the hyperbolic regime we have $s = \e^{-1}$, so the discriminant equation \eqref{eq:discr} reduces to 
\[ \Delta(z) = \dfrac4{\e^2}\left( -  s \left( 1 - \e^2 F'(\nu)\right) \partial_z \phi(z)  +   F'(\nu) \right)  \, .  \]
We naturally choose
\[ \partial_z \phi(z) = \dfrac{ \eps F'(\nu)}{1 - \eps^2 F'(\nu)}\, . \]
Within this choice for $\phi$ we get,
\begin{equation*}
\dfrac d{dt} \left( \dfrac12 \int_{z\leq0} \left( |u|^2 + |v|^2\right) e^{2\phi(z)}\, dz \right) + \int_{z\leq0} A(z) \left( \eps^2 F'(\nu(z)) u - v \right)^2 e^{2\phi(z)} \, dz =0\, ,
\end{equation*} 
where the additional weight in the dissipation writes:
\[ A(z) = \dfrac1{\eps^2\left( 1 - \eps^2 F'(\nu(z)) \right)} \, .\]

In the case $\eps^2 F'(0)>1$ we have $1 - \eps^2 F'(\nu(z))>0$ on $\supp\nu$ (see Section \ref{ssec:weakTW}). Notice that the monotonicity of $\phi$ may change on  $\supp\nu$ since $F'(\nu(z))$ may change sign. 
We observe that $A(z)$ is uniformly bounded from below on $\supp \U$.

In the transition case $\eps^2 F'(0) = 1 $, we have $\partial_z \phi(z) \to +\infty$ as $z\to 0^-$. We observe that $A(z)\to +\infty$ as $z\to 0^-$ too.

\end{proof}

\paragraph{Example: the case $F(\rho) = \rho(1 -\rho)$, and $\eps = 1$.} We can easily compute from Section~\ref{ssec:weakTW}
\[ \phi(z) = -\dfrac z2 -   \log\left( 1 - e^{z/2} \right) \, . \]
\begin{rmq}[Lack of coercivity] {\bf 1- The parabolic regime.} We directly observe that $A(z)\to 0$ as $z\to +\infty$ in the Lyapunov identity \eqref{eq:Lyap1}. This corresponds to the lack of coercivity of the linear operator. It has been clearly identified for the classical Fisher-KPP equation \cite{Gallay94,GallayRaugel}. This lack of coercivity is a source of complication for the next question, {\em i.e.} nonlinear stability (see Section \ref{sec:NLstab}). \\
{\bf 2- The hyperbolic regime.} The situation is more degenerated here: the dissipation  provides information about the relaxation of $v$ towards $\eps^2 F'(\U) u$ only.
\end{rmq}

\section{Nonlinear stability of traveling front solutions in the parabolic regime $\e^2 F'(0) < 1$}

\label{sec:NLstab}

In this Section we investigate the stability of the traveling profile having minimal speed $s = s^*(\e)$ in the parabolic regime. We seek stability in the energy class. Energy methods have been successfully applied to reaction-diffusion equations \cite{Gallay94,GallayRaugel,Risler,Gallay.Joly}. We follow the strategy developped in \cite{GallayRaugel} for a simpler equation, namely the damped hyperbolic Fisher-KPP equation. 

Before stating the theorem we give some useful notations. The perturbation is denoted by $ \w(t,z ) = \rho(t,z ) - \nu(z )  $ where $z  = x - s t$ is the space variable in the moving frame. We also need some weighted perturbation $\uu = e^{\phi} \w$, where $\phi$ is an explicit weight to be precised later \eqref{eq:weight phi}.

\begin{thm} \label{thm:NLstab}
For all $\e\in \left(0,1/\sqrt{F'(0)}\right)$ there exists a constant $c(\e)$ such that the following claim holds true: let $u^0$ be any compactly supported initial perturbation which satisfies 
\[ \| \w^0 \|_{H^1(\R)}^2 + \|\uu^0\|_{H^1(\R)}^2 \leq c(\e)\, ,  \]
then there exists $z_0 \in \R$ such that  
\[\sup_{t>0}\left( \|\partial_z  \w(t,\cdot)\|_2^2 + \int_{z  < z _0} |\w(t,z)|^2\, dz  + \|\uu(t,\cdot)\|_{H^1}^2\right) \leq c(\e)\, ,  \] 
remains uniformly small for all time $t>0$, and the perturbation is globally decaying in the following sense: 
\[ \left( \|\partial_z  \w\|_2^2  + \int_{z  < z _0} |\w|^2\, dz   + \|\partial_z  \uu\|_2^2 + \int_{z >z _0} e^{-\phi(z)} |\uu|^2 \, dz \right)\in L^2(0,+\infty)\, . \]
\end{thm}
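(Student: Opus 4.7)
The plan is to combine two energy estimates: an unweighted one for $\w$ that provides control on the left tail (where $\nu\approx 1$ and $F'(\nu)<0$ gives coercivity), and a weighted one for $\uu = e^{\phi}\w$ that provides control on the right tail (where $\nu$ vanishes and the Lyapunov functional of Section \ref{sec:linstab} applies). This mirrors the strategy of Gallay--Raugel for the damped hyperbolic Fisher--KPP equation.

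First, I would pass to the moving frame $z=x-st$ and derive the equation satisfied by $\w=\rho-\nu$. Subtracting \eqref{eqtw} from \eqref{eqprinc} written in the variable $z$ gives, schematically,
\begin{equation*}
\e^2(\partial_t - s\partial_z)^2\w + \bigl(1-\e^2 F'(\nu)\bigr)(\partial_t - s\partial_z)\w - \partial_{zz}\w - \bigl(F'(\nu) - \e^2 s\,F''(\nu)\,\nu'\bigr)\w = N(\w,\partial_t\w),
\end{equation*}
where $N$ is a quadratic remainder coming from the Taylor expansion of $F$ and $F'$ around $\nu$, controlled by $|N|\leq C(\w^2+|\w||\partial_t\w|)$ thanks to the smoothness and concavity of $F$. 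I would write the analogous equation for $\uu=e^{\phi}\w$ through the change of unknown; the linear part of this new equation is precisely the operator whose dissipative structure was exhibited in the proof of linear stability.

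Second, I would assemble a combined Lyapunov functional. Using the system formulation \eqref{eq:telegraph intro} for the perturbation $(\w,v)$ where $v$ is the perturbation of the flux, the weighted calculation of Section \ref{sec:linstab} yields
\begin{equation*}
\frac{d}{dt}\Bigl(\tfrac12\int \bigl(|\w|^2+|v|^2\bigr)e^{2\phi(z)}\,dz\Bigr) + \int A(z)\bigl(|\w|^2+|v|^2\bigr)e^{2\phi(z)}\,dz = \text{nonlinear terms},
\end{equation*}
while an unweighted estimate on the half-line $\{z<z_0\}$, chosen so that $-F'(\nu)\geq c>0$ there, produces the coercive term $\int_{z<z_0}|\w|^2\,dz$. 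Differentiating the system in $z$ and repeating the same calculation provides control of the $H^1$-type quantities $\|\partial_z\w\|_2^2$ and $\|\partial_z\uu\|_2^2$ appearing in the theorem, after absorbing boundary contributions at $z_0$ by a careful choice of cutoff and cross-terms.

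Third, the quadratic remainder is estimated via the one-dimensional Gagliardo--Nirenberg inequality
\begin{equation*}
\|\w\|_\infty^2 \leq 2\,\|\w\|_2\,\|\partial_z\w\|_2,
\end{equation*}
giving bounds of the form $\int |N|(|\w|+|\partial_t\w|)\,dz \leq C\sqrt{E(t)}\,D(t)$, where $E$ is the total energy and $D$ the dissipation. This yields an inequality $\dot E(t) + (1-C\sqrt{E(t)})D(t)\leq 0$ and a standard bootstrap argument based on the smallness $E(0)\leq c(\e)$ closes the estimate, delivering both uniform boundedness of $E$ and integrability $D\in L^1(0,+\infty)$, which is exactly the conclusion of the theorem.

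The main obstacle is the lack of coercivity of the weighted Lyapunov functional at $+\infty$, flagged in the Remark after Section \ref{sec:linstab}: the weight $A(z)$ degenerates as $z\to+\infty$, so the dissipation does not control the full weighted $L^2$-norm of $\uu$, only $\int_{z>z_0}e^{-\phi}|\uu|^2\,dz$, which is precisely why this term appears in the integrability statement. This forces the splitting into two separate estimates and the cross-matching on a bounded transition region around $z_0$. A secondary difficulty is keeping the damping coefficient $1-\e^2 F'(\rho)$ bounded away from zero, which only holds while $\w$ stays small in $L^\infty$; this is what ultimately forces $c(\e)$ to degenerate as $\e^2 F'(0)\nearrow 1$, explaining the restriction to the parabolic regime.
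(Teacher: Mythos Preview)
Your overall strategy is correct and matches the paper's: combine an unweighted estimate for $\w$ (coercive on the left tail where $F'(\nu)<0$) with the weighted estimate for $\uu=e^\phi\w$ (coercive modulo the degeneracy $A(z)\to0$ at $+\infty$), then close the nonlinear remainder via the one-dimensional Gagliardo--Nirenberg inequality and a smallness bootstrap yielding $\dot E + (1-C\sqrt{E})Q\le 0$.

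The technical implementation, however, differs from the paper's in two respects that are worth flagging. First, you propose to work with the first-order system formulation $(\w,v)$ for the weighted part and to obtain the $H^1$ control by \emph{differentiating the system in $z$}. The paper instead stays with the second-order wave formulation \eqref{eq:NL stab w}--\eqref{eq:NL stab u} throughout and builds four energy functionals by testing against the multipliers $\partial_t\w - s\partial_z\w$, $\w$, $\partial_t\uu$, $\uu$; the gradient control $\|\partial_z\w\|_2^2$ and $\|\partial_z\uu\|_2^2$ then drops out automatically from the cross terms, with no need to differentiate the equation (which would generate additional commutator terms from the variable coefficients $F'(\nu)$, $F''(\nu)\nu'$, etc.). Second, you describe an unweighted estimate restricted to the half-line $\{z<z_0\}$ with a cutoff and boundary terms to absorb; the paper avoids this by running the unweighted $\w$-estimates on all of $\R$, and then absorbing the non-coercive contribution over $\{z>z_0\}$ directly into the weighted $\uu$-dissipation via the pointwise bound $|\w(z)|\le e^{-\phi(z_0)}|\uu(z)|$ together with the key lemma $e^{-\phi}/A_5\in L^\infty(z_0,+\infty)$. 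This sidesteps any boundary contribution at $z_0$ entirely.

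Both routes should close, but the paper's choice of multipliers is cleaner: no differentiation of variable-coefficient equations, no cutoffs, and the four energies combine with explicit weights $\delta,\delta',\delta''$ depending only on $\e$ and $F'(0)$. Your remark on why $c(\e)$ degenerates as $\e^2F'(0)\nearrow 1$ is exactly right.
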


\begin{rmq}
\begin{enumerate}
\item
The additional weight $e^{-\phi(z)}$ in the last contribution (weighted $L^2$ space) is  specific to the lack of coercivity in the energy estimates. 
\item The constant $c(\e)$ that we obtain degenerates as $\e\to 1/\sqrt{F'(0)}$, due to the transition from a parabolic to an hyperbolic regime. 
\item We restrict ourselves to compactly supported initial perturbations $u^0$ to justify all integration by parts. Indeed the solution $u(t,z)$ remains compactly supported for all $t>0$ because of the finite speed of propagation (see the kinetic formulation \eqref{eq:twospeeds} and \cite[Chapter 12]{Evans-PDE}). The result would be the same if we were assuming that $u^0$ decays sufficiently fast at infinity.
\end{enumerate}
\end{rmq}

\begin{proof} We proceed in several steps.

\paragraph{1- Derivation of the energy estimates.}  

The equation satisfied by the perturbation $\w$ writes
\begin{multline}
\e^2 \left( \partial_{tt} \w -2 s \partial_{tz } \w + s^2 \partial_{z z } \w  \right) + \left( 1 - \e^2 F'(\nu + \w) \right)\left( \partial_t\w - s\partial_z  \w \right) - \partial_{z z } \w \\
+  \e^2 (F'(\nu + \w) - F'(\nu)) s \partial_{z } \nu  = F(\nu + \w) - F(\nu)\, .
\label{eq:NL stab w1}
\end{multline}
We write the nonlinearities as follows:
\begin{align*} 
& F'(\nu + \w) = F'(\nu) + K_1(z ;\w) \w\, , \\
&  F'(\nu + \w) - F'(\nu)  = F''(\nu) \w + K_2(z ;\w) \w^2\, , \\ 
&  F(\nu + \w) - F(\nu)  = F'(\nu) \w + K_3(z ;\w) \w^2\, .
  \end{align*}
where the functions $K_i$ are uniformly bounded in $L^\infty(\R)$. More precisely we have 
\begin{equation*} 
K_1(z ;\w) = \int_0^1 F''(\nu + t \w)dt\, ,   \; K_2(z ;\w) = \int_0^1 (1-t) F'''(\nu + t \w)dt \, , \; K_3(z ;\w) = \int_0^1 (1-t) F''(\nu + t \w)dt\, .
\end{equation*}
Thus we can decompose equation \eqref{eq:NL stab w1} into linear and nonlinear contributions:
\begin{multline}
\e^2 \left( \partial_{tt} \w -2 s \partial_{tz } \w + s^2 \partial_{z z } \w  \right) + \left( 1 - \e^2 F'(\nu) \right) \left( \partial_t\w - s\partial_z  \w \right)   - \partial_{z z } \w 
+ \left(s \e^2 F''(\nu) \partial_{z } \nu   - F'(\nu) \right) \w  \\
=\e^2 K_1(z ;\w) \w \left( \partial_t\w - s\partial_z  \w\right) + \left( K_3(z ;\w) - s \e^2 K_2(z ;\w)\partial_z  \nu \right) \w^2 \, .
\label{eq:NL stab w}
\end{multline}
Testing  equation \eqref{eq:NL stab w} against $ \partial_t\w - s\partial_z  \w$ yields our first energy estimate (hyperbolic energy):
\begin{multline}
\dfrac{d}{dt} \left\{ \dfrac{\e^2}2 \int_\R \left| \partial_t\w - s\partial_z  \w \right|^2\, dz   + \dfrac12 \int_\R \left| \partial_z  \w \right|^2\, dz  + \dfrac12 \int_\R \left(s \e^2 F''(\nu) \partial_{z } \nu   - F'(\nu) \right) |\w|^2\, dz  \right\} \\
 + \int_\R \left( 1 - \e^2 F'(\nu) \right) \left| \partial_t\w - s\partial_z  \w \right|^2 + \dfrac s2 \int_\R \partial_z  \left( s \e^2 F''(\nu) \partial_{z } \nu   - F'(\nu) \right) |\w|^2 \, dz  \\
  = \e^2 \int_\R K_1(z ;\w) \w \left| \partial_t\w - s\partial_z  \w \right|^2\, dz  + \int_\R \left( K_3(z ;\w) - s e^2 K_2(z ;\w)\partial_z  \nu \right) \w^2 \left( \partial_t\w - s\partial_z  \w \right)\, dz \, .
  \label{eq:Ew1(t)}
\end{multline}
We are lacking coercivity with respect to $H^1$ norm in the energy dissipation. Testing  equation \eqref{eq:NL stab w} against $ \w$ yields our second energy estimate (parabolic energy):
\begin{multline}
\dfrac{d}{dt} \left\{ \e^2 \int_\R \w \left( \partial_t\w - s\partial_z  \w\right)\, dz  + \dfrac12 \int_\R \left( 1 - \e^2 F'(\nu) \right) |\w|^2\, dz  \right\}  \\
 - \e^2 \int_\R |\partial_t \w - s\partial_z  \w|^2\, dz  +  \int_\R |\partial_z  \w|^2\, dz  + \int_\R \left( \dfrac{s \e^2}2 F''(\nu) \partial_{z } \nu   - F'(\nu) \right) |\w|^2 \, dz  \\
 = \e^2 \int_\R K_1(z ;\w) \w^2 \left( \partial_t\w - s\partial_z  \w\right) \, dz  + \int_\R \left( K_3(z ;\w) - s e^2 K_2(z ;\w)\partial_z  \nu \right) \w^3 \, dz \, .
  \label{eq:Ew2(t)}
\end{multline}
We introduce the following notations for the two energy contributions and the respective quadratic dissipations \eqref{eq:Ew1(t)},\eqref{eq:Ew2(t)}:
\begin{align*}
E^\w_1(t) & = \dfrac{\e^2}2 \int_\R \left| \partial_t\w - s\partial_z  \w \right|^2\, dz   + \dfrac12 \int_\R \left| \partial_z  \w \right|^2\, dz  + \dfrac12 \int_\R \left(s \e^2 F''(\nu) \partial_{z } \nu   - F'(\nu) \right) |\w|^2\, dz \, ,  \\
E^\w_2(t) &= \e^2 \int_\R \w \left( \partial_t\w - s\partial_z  \w\right)\, dz  + \dfrac12 \int_\R \left( 1 - \e^2 F'(\nu) \right) |\w|^2\, dz \, , \\
Q^\w_1(t) &= \int_\R \left( 1 - \e^2 F'(\nu) \right) \left| \partial_t\w - s\partial_z  \w \right|^2 + \dfrac s2 \int_\R \partial_z  \left( s \e^2 F''(\nu) \partial_{z } \nu   - F'(\nu) \right) |\w|^2 \, dz \, , \\
Q^\w_2(t) &= - \e^2 \int_\R |\partial_t \w - s \partial_z |^2\, dz   +  \int_\R |\partial_z  \w|^2\, dz  + \int_\R \left( \dfrac{s \e^2}2 F''(\nu) \partial_{z } \nu   - F'(\nu) \right) |\w|^2 \, dz \, .
\end{align*}
The delicate issue is to control the zeroth-order terms. In particular we define the weights 
\begin{align*}
A_1(z ) &= s \e^2 F''(\nu) \partial_{z } \nu   - F'(\nu)\, ,\\
A_2(z ) &= \dfrac{s \e^2}2 F''(\nu) \partial_{z } \nu   - F'(\nu)\, .
\end{align*} 
They change sign over $\R$. More precisely we have
\begin{align*}
& \lim_{z \to - \infty} A_1(z ) = \lim_{z \to - \infty} A_2(z ) = - F'(1)>0\, , \\
& \lim_{z \to + \infty} A_1(z ) = \lim_{z \to + \infty} A_2(z ) = - F'(0)<0\, .
\end{align*}

To circumvent this issue we introduce $\uu(t,z ) = e^{\phi(z)} \w(t,z )$ as in \cite{GallayRaugel} and the previous Section \ref{sec:linstab}, where $\phi(z ) $ is a  weight to be determined later \eqref{eq:weight h}.
The new function $\uu(t,z )$ satisfies the following equation:
\begin{multline}
\e^2  \partial_{tt} \uu -2 \e^2 s \partial_{tz } \uu + \left(  2 \e^2 s \partial_z  \phi +  1 - \e^2 F'(\nu) \right) \partial_t \uu  + \left( - s \left( 1 - \e^2 F'(\nu)  \right) - 2 (\e^2 s^2 -1 ) \partial_z  \phi  \right) \partial_z  \uu      \\ 
+ (\e^2 s^2 - 1)  \partial_{z z } \uu   
+ \left(   s \left( 1 - \e^2 F'(\nu) \right)\partial_z  \phi + (\e^2 s^2 - 1)\left( - \partial_{z z } \phi + |\partial_z  \phi|^2\right) + \e^2 s   F''(\nu)  \partial_{z } \nu - F'(\nu) \right) \uu       \\
= \e^2 K_1(z ;u)u \left( \partial_t\uu - s\partial_z  \uu \right) +  \left(   K_3(z ;u) - \e^2 s K_2(z ;u) \partial_{z } \nu   +  \e^2 s K_1(z ;u)\partial_z  \phi \right) u \uu  \, .
\label{eq:NL stab u}
\end{multline}
We denote the prefactors of $\partial_t \uu$, $\partial_z  \uu$ and $\uu$ as $A_3$, $A_4$ and $A_5$ respectively:
\begin{align}
& A_3(z ) =   2 \e^2 s \partial_z  \phi +  1 - \e^2 F'(\nu)\,,  \nonumber 
\\ 
& A_4(z ) = - s \left( 1 - \e^2 F'(\nu)  \right) - 2 (\e^2 s^2 - 1) \partial_z  \phi \,, \label{eq:A2}\\
& A_5(z ) =   s \left( 1 - \e^2 F'(\nu) \right)\partial_z  \phi + (\e^2 s^2 - 1)\left(- \partial_{z z } \phi + |\partial_z  \phi|^2\right) + \e^2 s   F''(\nu)  \partial_{z } \nu - F'(\nu)\, . \nonumber
\end{align}

 
Testing \eqref{eq:NL stab u} against $\partial_t \uu$, we obtain our third energy estimate:
\begin{multline*}
\frac{d}{dt} \left\{\dfrac{\epsilon^2}2 \int_{\R}   | \partial_t \uu |^2  \, dz + \dfrac{1 - \epsilon^2 s^2}2 \int_{\R}  | \partial_z  \uu |^2  \, dz  + \dfrac12\int_{\R}  A_5(z )   | \uu |^2  \, dz  
\right\} \\ 
+ \int_{\R} A_3(z ) | \partial_t \uu |^2 \, dz  + \int_{\R} A_4(z )  \partial_t \uu \partial_z  \uu \, dz  \\ 
=  \e^2\int_{\R} K_1(z ;u) u \left( |\partial_t\uu|^2 - s \partial_t \uu \partial_z  \uu \right)\, dz  + \int_\R \left(   K_3(z ;u) - \e^2 s K_2(z ;u) \partial_{z } \nu   + \e^2 s K_1(z ;u)\partial_z  \phi \right) u w \partial_t \uu \, dz  \, ,
\end{multline*}
Testing \eqref{eq:NL stab u} against $\uu$ we obtain our last energy estimate:
\begin{multline*}
\frac{d}{dt} \left\{\epsilon^2 \int_{\R}  \uu\partial_t \uu \, dz + \dfrac12 \int_{\R} A_3(z )  | \uu |^2  \, dz  \right\} \\
- \epsilon^2  \int_{\R} | \partial_t \uu |^2 \, dz  + (1 - \epsilon^2 s^2) \int_{\R} | \partial_z  \uu |^2 \, dz  +  2s\epsilon^2 \int_{\R} \partial_t \uu \partial_z  \uu \, dz  + \int_{\R} \left( A_5(z ) - \dfrac{\partial_z  A_4(z )}{2} \right) | \uu |^2 \, dz  \\
=  \int_\R \left(   K_3(z ;u) - \e^2 s K_2(z ;u) \partial_{z } \nu   +  \e^2 s K_1(z ;u)\partial_z  \phi  \right) u w^2 \, dz  + \int_\R \e^2 K_1(z ;u) u w \left( \partial_t\uu - s\partial_z  \uu \right) \, dz  \, ,
\end{multline*}
We introduce again useful notations for the two energy contributions and the associated quadratic dissipations:
\begin{align}
& E^\uu_1(t) = \dfrac{\epsilon^2}2 \int_{\R}   | \partial_t \uu |^2  \, dz + \dfrac{1 - \epsilon^2 s^2}2 \int_{\R}  | \partial_z  \uu |^2  \, dz  + \dfrac12\int_{\R}  A_5(z )   | \uu |^2  \, dz  
\,,\label{eq:Eu1} \\
& E^\uu_2(t) = \epsilon^2 \int_{\R}  \uu\partial_t \uu \, dz + \dfrac12 \int_{\R} A_3(z )  | \uu |^2 \, dz   \, , \label{eq:Eu2} \\
& Q^\uu_1(t) = \int_{\R} A_3(z ) | \partial_t \uu |^2 \, dz  + \int_{\R} A_4(z )  \partial_t \uu \partial_z  \uu \, dz \,. \nonumber 
\\
& 
Q^\uu_2(t) = - \epsilon^2  \int_{\R} | \partial_t \uu |^2 \, dz  + (1 - \epsilon^2 s^2) \int_{\R} | \partial_z  \uu |^2 \, dz  +  2 \epsilon^2 s \int_{\R} \partial_t \uu \partial_z  \uu \, dz  + \int_{\R} \left( A_5(z ) - \dfrac{\partial_z  A_4(z )}{2} \right) | \uu |^2 \, dz \, . \label{eq:Qu2}
\end{align}

To determine $\phi(z )$ we examinate \eqref{eq:Eu1}--\eqref{eq:Eu2}. We first require the natural condition $\partial_z  \phi(z ) \geq 0$.
This clearly ensures $A_3(z )\geq 1 - \e^2 F'(0)$. 
%
We examinate the condition $A_5(z ) -  \frac12\partial_z  A_4(z )\geq 0$ \eqref{eq:Qu2} in order to fully determine the weight $\phi(z)$:
\begin{align*}
A_5(z ) - \dfrac{\partial_z  A_4(z )}{2} &=   s \left( 1 - \e^2 F'(\nu) \right)\partial_z  \phi + (\e^2 s^2 - 1)\left( - \partial_{z z } \phi + |\partial_z  \phi|^2\right) + \e^2 s   F''(\nu)  \partial_{z } \nu - F'(\nu) \\
& \quad - \dfrac12 \e^2 s F''(\nu) \partial_z  \nu + (\e^2 s^2 - 1)     \partial_{z z } \phi   
\\
& = (\e^2 s^2 - 1) \left|\partial_z  \phi\right|^2 + s \left( 1 - \e^2 F'(\nu) \right)\partial_z  \phi   + \dfrac12 \e^2 s F''(\nu) \partial_z  \nu - F'(\nu)\, .
\end{align*}
This is a second-order equation in the variable $\partial_z  \phi $. 
Maximization of this quantity is achieved when
\begin{equation} \partial_z  \phi  = \dfrac{s\left( 1 - \e^2 F'(\nu) \right)}{2(1 - \e^2 s^2 )} \, \geq \, 0\, . \label{eq:weight h}
\end{equation} 
We notice that this is equivalent to setting $A_4(z ) = 0$ \eqref{eq:A2}. Then we obtain
\begin{align*}
A_5(z ) & = \dfrac{  s^2 \left(1 - \e^2F'(\nu)\right)^2}{4(1 - \e^2 s^2)}   
+ \dfrac12 \e^2 s F''(\nu) \partial_z  \nu - F'(\nu)\\
& = \dfrac{ 1}{4(1 - \e^2 s^2)} \left( s^2 \left(1 + \e^2F'(\nu)\right)^2  - 4 F'(\nu)\right)+ \dfrac12 \e^2 s F''(\nu) \partial_z  \nu  \\
& = \dfrac{1}{4\left(1 - \e^2 F'(0)\right)^2} 
\left(  4F'(0)  \left(1 + \e^2F'(\nu)\right)^2 - 4 F'(\nu) \left(1 + \e^2 F'(0)\right)^2 \right) + \dfrac12 \e^2 s F''(\nu) \partial_z  \nu \\
& =   \dfrac{1}{\left(1 - \e^2 F'(0)\right)^2} \left( F'(0) - F'(\nu)\right) \left( 1 - \e^4 F'(0) F'(\nu)\right)  + \dfrac12 \e^2 s F''(\nu) \partial_z  \nu\, .
\end{align*}
We check that $A_5(z )\geq 0$ since $\forall z \; F'(\nu(z )) \leq F'(0) $, $\e^2 F'(0)<1$, $\forall z \; F''(\nu(z ))\leq 0$ and $\forall z \; \partial_z~\nu(z)\leq0$. 

We recall that the exponential decay of $\nu$ at $+\infty $ is given by the eigenvalue $\lambda>0$, where
\[ \lambda = \dfrac{s(1 - \e^2 F'(0))}{2(1 - \e^2 s^2 )}\, . \]
Therefore we can rewrite
\begin{equation} 
\partial_z  \phi =   \lambda \frac{1 - \e^2 F'(\nu) } { 1 - \e^2 F'(0) }\, . 
\label{eq:weight phi}
\end{equation}

\begin{rmq}
As far as we are concerned with linear stability, the energies $E^\uu_1$ and $E^\uu _2$ contain enough information. However proving nonlinear stability requires an additional control of $\w$ in $L^\infty$ which can be obtained using $E^\w_1$ and $E^\w _2$ \cite{GallayRaugel}.
\end{rmq}


\paragraph{2- Combination of the energy estimates.} 
We first examinate the energies $E_1^\w$ and $E_2^\w$. We clearly have
\begin{align*}
E^\w_2(t) & \geq - \dfrac{\e^4}{1 - \e^2 F'(0)}  \left\| \partial_t\w - s\partial_z  \w \right\|^2_2 -    \dfrac{1 - \e^2 F'(0)}4   \|\w\|_2^2  + \dfrac{1 - \e^2 F'(0)}2 \|\w\|_2^2 \\
& \geq - \dfrac{\e^4}{1 - \e^2 F'(0)}  \left\| \partial_t\w - s\partial_z  \w \right\|^2_2 +     \dfrac{1 - \e^2 F'(0)}4   \|\w\|_2^2
\end{align*}
We set 
\begin{equation*}
\delta = \dfrac{1 - \e^2 F'(0)}{2 \e^2}\, .
\end{equation*}
We have on the one hand
\begin{align*}
E^\w_1(t) + \delta E^\w_2(t) &\geq 
\dfrac12  \left\| \partial_z  \w \right\|_2^2\, dz  +   \int_\R A_6(z ) |\w|^2\, dz \, ,
\end{align*}
where $A_6(z )$ is defined as
\[ A_6(z ) =  \dfrac12 A_1(z ) + \delta \dfrac{ 1 - \e^2 F'(0)}4\, . \]
We have on the other hand,
\begin{align*}
Q^\w_1(t) + \delta Q^\w_2(t) &\geq 
\dfrac{1 - \e^2 F'(0)}2 \left\| \partial_t\w - s\partial_z  \w \right\|^2_2 + \delta \|\partial_z  \w\|_2^2 + \int_\R A_7(z )|\w|^2 \, dz \, , 
\end{align*}
where $A_7(z )$ is defined as
\[ A_7(z ) = \dfrac s2  \partial_z  \left( s \e^2 F''(\nu) \partial_{z } \nu   - F'(\nu) \right) + \delta A_2(z )\, . \]

We have both $\lim_{z \to -\infty} A_6(z ) >0$ and $\lim_{z \to -\infty} A_7(z ) >0$. Accordingly there exists $\alpha>0$ and $z _0\in \R$ such that 
\[ \forall z < z _0\quad \min(A_6(z ), A_7(z )) > \alpha\, . \]

In order to control the zeroth-order terms over $(z _0,+\infty)$ we shall use the last two energy estimates. 
First we observe that $\forall z > z _0 \; |\w(z )| = |e^{-\phi(z)} \uu(z )| \leq e^{-\phi(z_0)} |\uu(z )|$ since $\phi$ is increasing. We set $\phi(z _0) = 0$ without loss of generality. This determines completely $ \phi $ together with the condition \eqref{eq:weight h}.
We have
\begin{align*}
 E^\w_1(t) + \delta E^\w_2(t) &\geq \dfrac12  \left\| \partial_z  \w \right\|_2^2\, dz  +   \alpha \int_{z < z _0}  |\w|^2\, dz  - \left\|A_6 e^{-2\phi} \ind_{z >z _0} \right\|_\infty \int_{z >z _0} |\uu|^2\, dz  \, , \\
  Q^\w_1(t) + \delta Q^\w_2(t) &\geq \dfrac{1 - \e^2 F'(0)}2 \left\| \partial_t\w - s\partial_z  \w \right\|^2_2 + \delta \|\partial_z  \w\|_2^2 +  \alpha \int_{z < z _0}  |\w|^2\, dz  - \left\|\dfrac{A_7 e^{-2\phi}}{A_5}\ind_{z >z _0}\right\|_\infty \int_{z >z _0} A_5(z )|\uu|^2 \, dz \, .
\end{align*}

\begin{lem}
We have $\dfrac{A_7 e^{-2\phi}}{A_5}  \in L^\infty(z _0,+\infty)$ and $\dfrac{ e^{-\phi} }{A_5}  \in L^\infty(z _0,+\infty)$.
\end{lem}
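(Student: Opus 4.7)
The plan is to reduce both claims to sharp asymptotics as $z\to+\infty$: the functions are smooth and $A_5$ is strictly positive on compact subintervals of $(z_0,+\infty)$, so only the decay rates at infinity matter for $L^\infty$ bounds.

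First I would pin down the exponential decay rate of the profile at $+\infty$. At the minimal speed $s=s^*(\e)$ the discriminant of \eqref{eq:charac} vanishes and we get a double root $\lambda$. Combining Lemma \ref{lem:varphi<lambda} (giving the upper bound $-\nu'/\nu \le \lambda$) with a matching lower bound coming from the linearization at $\nu=0$ (the profile cannot decay faster than the double root), we expect
\[ \nu(z),\ |\partial_z \nu(z)| \asymp (1+z)\,e^{-\lambda z}\qquad\text{on } [z_0,+\infty). \]
For the weight, since $\partial_z \phi - \lambda = \lambda \e^2 (F'(0)-F'(\nu))/(1-\e^2 F'(0)) = O(\nu)$ is integrable at $+\infty$, we get $\phi(z) = \lambda z + O(1)$, hence $e^{-\phi(z)} \asymp e^{-\lambda z}$.

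Next I would Taylor expand $A_5$ at $\nu=0$ to extract the leading behaviour,
\[ A_5(z) = \frac{-F''(0)\,(1+\e^2 F'(0))}{1-\e^2 F'(0)}\,\nu(z) + \tfrac12 \e^2 s\, F''(0)\,\partial_z \nu(z) + O\bigl(\nu^2 + |\nu\, \partial_z \nu|\bigr). \]
Since $F$ is (strictly) concave ($F''(0)\le 0$), $\nu\ge 0$ and $\partial_z \nu \le 0$, both first-order contributions are nonnegative, and a direct check at $s=s^*(\e)$ (reusing the linear-stability computation performed right after \eqref{eq:discr}) yields $A_5(z) \asymp (1+z)\,e^{-\lambda z}$, in particular $A_5(z) \gtrsim e^{-\lambda z}$ on $[z_0,+\infty)$. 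On the other hand, $A_7(z)$ is bounded on $[z_0,+\infty)$: the term $\frac{s}{2}\partial_z(s\e^2 F''(\nu)\partial_z \nu - F'(\nu))$ decays exponentially as $z\to+\infty$, while $\delta A_2(z) \to -\delta F'(0)$.

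Combining these asymptotics gives
\[ \frac{e^{-\phi(z)}}{A_5(z)} \lesssim \frac{1}{1+z},\qquad \frac{A_7(z)\,e^{-2\phi(z)}}{A_5(z)} \lesssim \frac{e^{-\lambda z}}{1+z}, \]
both bounded (in fact decaying to $0$) on $[z_0,+\infty)$, which is the desired claim. The main technical obstacle is obtaining the matching lower bound $A_5(z) \gtrsim e^{-\lambda z}$: this requires a lower bound on $\nu$ that matches its natural decay rate, which is delicate precisely because the minimal speed corresponds to a double root, and it requires checking strict positivity of the leading coefficient in the Taylor expansion, which uses the strict concavity of $F$ at $0$; a degenerate case $F''(0)=0$ would force a higher-order expansion.
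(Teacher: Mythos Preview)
Your argument is correct, but it is considerably more elaborate than necessary, and the step you flag as ``the main technical obstacle'' is in fact a one-liner already contained in the paper.

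The paper's proof avoids sharp asymptotics entirely and uses only three one-sided estimates. First, dropping the nonnegative term $\tfrac12\e^2 s F''(\nu)\partial_z\nu$ in the expression for $A_5$ and using $F'(0)-F'(\nu)\ge c\,\nu$ (strict concavity) gives directly $A_5(z)\ge c'\,\nu(z)$; no Taylor expansion with $\partial_z\nu$ is needed. Second, Lemma~\ref{lem:varphi<lambda}, which you cite only for the \emph{upper} bound $-\nu'/\nu\le\lambda$, is precisely the inequality $\nu'+\lambda\nu\ge 0$, and integrating it from $z_0$ gives $\nu(z)\ge \nu(z_0)\,e^{-\lambda(z-z_0)}$. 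This is the lower bound on $\nu$ you were looking for --- there is nothing delicate about the double root here. Third, the explicit formula $\partial_z\phi=\lambda\,\dfrac{1-\e^2 F'(\nu)}{1-\e^2 F'(0)}\ge\lambda$ gives immediately $e^{-\phi(z)}\le e^{-\lambda(z-z_0)}$ (recall $\phi(z_0)=0$), with no need for the asymptotic equivalence $e^{-\phi}\asymp e^{-\lambda z}$. Chaining these,
\[
e^{-\phi(z)}\;\le\; e^{-\lambda(z-z_0)}\;\le\; C\,\nu(z)\;\le\; C'\,A_5(z),
\]
which is the second claim. The first then follows since $A_7 e^{-\phi}$ is bounded on $(z_0,+\infty)$, exactly as you observed.

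What your approach buys is a sharper conclusion ($e^{-\phi}/A_5\lesssim(1+z)^{-1}\to 0$), at the cost of invoking the precise $(1+z)e^{-\lambda z}$ behaviour of the minimal-speed front, which is true but is extra work not required for the lemma. The paper's route shows that the crude exponential bounds, all of which are already established earlier, suffice.
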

\begin{proof}
The first claim is clearly a consequence of the second claim since $A_7 e^{-\phi} \in L^\infty(z _0,+\infty)$. First we have 
\[ F'(0) - F'(\nu) \geq \left( \inf_{[0,1]}\left( - F'' \right) \right) \nu\, = \alpha \nu\;, \]
where $\alpha > 0$ is the coercivity constant of $-F$ \eqref{eq:hyp F}. As a consequence, $A_5 \geq \left( \frac{\left( 1 + \e^2 F'(0) \right) \alpha }{1 - \e^2 F'(0)} \right) \nu $.
Second we recall $\partial_z  \nu + \lambda \nu \geq 0$ (Lemma \ref{lem:varphi<lambda}), so that $\forall z > z _0,\; \nu(z ) \geq \nu(z_0) e^{-\lambda ( z - z_0) }$. Finally we have $\forall z,  \; \partial_z  \phi \geq  \lambda$ \eqref{eq:weight phi}, thus
\[ \forall z  > z _0, \quad   e^{-\phi(z)} \leq e^{- \phi(z_0)} e^{-\lambda(z  - z _0)}\leq \frac{e^{- \phi(z_0)}}{\nu(z_0)} \nu(z) \leq \frac{e^{- \phi(z_0)}}{\nu(z_0)}  \left( \frac{1 - \e^2 F'(0)}{\left( 1 + \e^2 F'(0) \right) \alpha } \right)  A_5(z) \, . \]
\end{proof}

We now focus on the second series of energy estimates. We clearly have
\begin{align*}  
E_2^\uu(t) & \geq - \dfrac{\e^4}{1 - \e^2 F'(0)}  \left\| \partial_t \uu \right\|^2_2 -     \dfrac{1 - \e^2 F'(0)}4   \|\uu\|_2^2  + \dfrac{1 - \e^2 F'(0)}2 \|\uu\|_2^2 \\
& \geq - \dfrac{\e^4}{1 - \e^2 F'(0)}  \left\| \partial_t\uu \right\|^2_2 +     \dfrac{1 - \e^2 F'(0)}4   \|\uu\|_2^2\, ,
\end{align*}
and 
\begin{align*}
Q_2^\uu(t) & \geq - \e^2 \|\partial_t \uu\|_2^2  + (1 - \e^2 s^2)\|\partial_z  \uu\|_2^2 - \dfrac{2\e^4 s^2}{1 - \e^2 s^2} \|\partial_t \uu\|_2^2  - \dfrac{1 - \e^2 s^2}2 \|\partial_z  \uu\|_2^2 + \int_{\R}   A_5(z )   | \uu |^2 \, dz  \\
& \geq - \e^2 \dfrac{1 + \e^2 s^2}{1 - \e^2 s^2}\|\partial_t \uu\|_2^2 + \dfrac{1 - \e^2 s^2}2 \|\partial_z  \uu\|_2^2 + \left\| \dfrac {e^{-\phi}}{A_5} \ind_{z >z _0} \right\|_\infty^{-1} \int_{z >z _0}   e^{-\phi}   | \uu |^2 \, dz \, .
\end{align*}
We set
\[ \delta' =  \dfrac{1 - \e^2 F'(0)}{2\e^2} \cdot \dfrac{1 - \e^2s^2}{1 + \e^2s^2} < \delta \, .\]
We have on the one hand
\begin{align*}
E^\uu_1(t) + \delta' E^\uu_2(t) &\geq 
\dfrac{1 - \e^2 s^2}2  \left\| \partial_z  \uu \right\|_2^2\, dz  +   \int_\R A_8(z ) |\uu|^2\, dz \, ,
\end{align*}
where $A_8(z )$ is defined as
\[ A_8(z ) =  \dfrac12 A_5(z ) + \delta' \dfrac{ 1 - \e^2 F'(0)}4 \geq \delta' \dfrac{ 1 - \e^2 F'(0)}4 \, . \]
We have on the other hand,
\begin{align*}
Q^\uu_1(t) + \delta' Q^\uu_2(t) &\geq 
\dfrac{1 - \e^2 F'(0)}2 \left\| \partial_t\uu \right\|^2_2 + \delta' \dfrac{1 - \e^2 s^2}2 \|\partial_z  \uu\|_2^2  + \delta' \left\| \dfrac {e^{-\phi}}{A_5} \ind_{z >z _0} \right\|_\infty^{-1} \int_{z >z _0} h |\uu|^2 \, dz \, . 
\end{align*}

Combining all these estimates, we define $E(t) = E_1^\uu(t) + \delta' E_2^\uu(t) + \delta''\left( E_1^\w(t) + \delta E_2^\w(t) \right)$ and $Q(t) = Q_1^\uu(t) + \delta' Q_2^\uu(t) + \delta''\left( Q_1^\w(t) + \delta Q_2^\w(t) \right)$, where $\delta''>0$ is defined such as the following condition holds true
\[ \delta'' < \delta' \min\left(   \dfrac{1 - \e^2 F'(0)}4 \left\|A_6 e^{-2\phi} \ind_{z >z _0} \right\|_\infty^{-1},   \left\|\dfrac{A_7 e^{-2\phi}}{A_5}\ind_{z >z _0}\right\|_\infty^{-1} \right) \, .  \]
 finally obtain our main estimate,
\begin{multline}
\dfrac d{dt} E(t) + Q(t)   \leq \mathcal O\left( \int_\R |\w||\partial_t \w - s\partial_z  \w|^2\, dz   + \int_\R |\w|^3\, dz  \right) \\ + \mathcal O\left( \int_\R e^{-\phi}|\uu||\partial_t \uu|^2\, dz  + \int_\R e^{-\phi}|\uu||\partial_z  \uu|^2\, dz  + \int_\R e^{-\phi} |\uu|^3\, dz   \right)\, , 
\label{eq:main}
\end{multline}
where 
\begin{align*}
E(t) & \geq \mathcal O\left( \|\partial_z  \w\|_2^2 + \int_{z  < z _0} |\w|^2\, dz  +   \|\partial_z  \uu\|_2^2 + \|\uu\|_2^2\right)\, ,  \\
Q(t) & \geq \mathcal O\left( \|\partial_t - s\partial_z  \w\|_2^2 + \|\partial_z  \w\|_2^2  + \int_{z  < z _0} |\w|^2\, dz  + \|\partial_t \uu\|_2^2 + \|\partial_z  \uu\|_2^2 + \int_{z >z _0} e^{-\phi} |\uu|^2\, dz  \right)\, .
\end{align*}

\paragraph{3- Control of the nonlinear contributions.}
Our goal is to control the size of the perturbation $\w$ in $L^\infty$. For this purpose we use the embeddings of $H^1(\R)$ into $L^\infty(\R)$ and $L^4(\R)$: 
\begin{align*}
\|\w \|_{\infty}  & \leq C \|\w\|_2^{1/2} \|\partial_z  \w\|_2^{1/2} \, ,
\\
\|\w \|_{4} & \leq C \|\w\|_2^{3/4} \|\partial_z  \w\|_2^{1/4} \, .
\end{align*}
We examinate successively the nonlinear contributions. We recall $\w = e^{-\phi} \uu$. First we have
\begin{align*}  
\int_\R |\w||\partial_t \w - s\partial_z  \w|^2\, dz  & \leq \|\w\|_\infty \|\partial_t - s\partial_z  \w\|_2^2 \\
& \leq \mathcal O\left( E(t)^{1/2} Q(t) \right)\,,  
\end{align*}
and similar estimates can be derived for all the contributions in the r.h.s. of \eqref{eq:main} except for the last one. 
Second we have
\begin{align*}
\int_\R e^{-\phi} |\uu|^3\, dz  & = \int_{z <z _0} e^{-\phi} |\uu|^3\, dz  + \int_{z >z _0} e^{-\phi} |\uu|^3\, dz 
\\
& \leq \| \w \|_{L^{\infty}(-\infty,z _0)}\| \uu \|^2_{L^{2}(-\infty,z _0)} + \| e^{-\phi/2}  \uu \|_{L^4(z _0,+\infty)}^{2}\|\uu\|_{L^2(z _0,+\infty)} \\
& \leq C \| \w \|_{L^{2}(-\infty,z _0)}^{1/2} \| \partial_z  \w \|_{L^{2}(-\infty,z _0)}^{1/2} \| \uu \|^2_{L^{2}(-\infty,z _0)} 
\\& \quad + C \| e^{-\phi/2} \uu \|_{L^2(z _0,+\infty)}^{3/2}   
\left\| \partial_z \left( e^{-\phi/2} \uu\right) \right\|^{1/2}_{L^2(z _0,+\infty)}
\|\uu\|_{L^2(z _0,+\infty)}\, .
\end{align*}
We have 
\begin{align*}
\| \uu \|^2_{L^{2}(-\infty,z _0)} &   \leq \|   \w \|^2_{L^{2}(-\infty,z _0)} \,, \\
\| e^{-\phi/2} \uu \|^2_{L^2(z _0,+\infty)} & = \int_{z >z _0} e^{-\phi} |\uu|^2\, dz   \, , \\
\left\| \partial_z \left( e^{-\phi/2} \uu\right) \right\|^2_{L^2(z _0,+\infty)} &\leq  2
\int_{z >z _0} \left(  e^{-\phi}  |\partial_z  \uu|^2 + \dfrac14 |\partial_z  \phi|^2 e^{-\phi} |\uu|^2 \right)\, dz  \leq 2 \int_{z >z _0}  |\partial_z  \uu|^2\, dz  + C \int_{z >z _0} e^{-\phi} |\uu|^2\, dz  \,.
\end{align*}
Consequently we obtain
\begin{equation*}
\int_\R e^{-\phi} |\uu|^3\, dz  \leq \mathcal O\left( E(t)^{1/2} Q(t) \right)\, . 
\end{equation*}

Finally we get
\begin{equation*}
\dfrac d{dt} E(t) + Q(t)   \leq \mathcal O\left(E(t)^{1/2} Q(t) \right)\, . 
\end{equation*}
This estimate ensures that the energy is nonincreasing provided that it is initially small enough. Indeed there exists a constant $C$ such that $\frac d{dt} E(t) + Q(t) \leq C E(t)^{1/2} Q(t)$. We set $c = C^{-2}/2$. If initially $E^0\leq c$ then the previous differential inequality guarantees that $E(t)$ is decaying and remains below the level $c$. Therefore $E(t)$ is positive decaying, and the dissipation $Q(t)$ is integrable. This concludes the proof of Theorem \ref{thm:NLstab}.
\end{proof}

\end{document}